\documentclass[review]{elsarticle}

\usepackage{lineno,hyperref}
\modulolinenumbers[5]
\journal{Journal of Computational Physics}

\usepackage{lipsum}
\usepackage{amsmath,amsfonts,amsthm,mathrsfs,amssymb}
\usepackage{graphicx}
\usepackage{epstopdf}
\usepackage{color}
\usepackage{algorithmic}
\newcommand{\R}{{\mathbb R}}

\newcommand{\C}{{\mathbb C}}

\newcommand{\bb}{\boldsymbol}

\newcommand{\be}{\begin{eqnarray}}
\newcommand{\ben}{\begin{eqnarray*}}
\newcommand{\en}{\end{eqnarray}}
\newcommand{\enn}{\end{eqnarray*}}

\newcommand{\pa}{\partial}

\newcommand{\ov}{\overline}

\newcommand{\G}{\Gamma}

\newcommand{\Om}{\Omega}

\newtheorem{theorem}{Theorem}[section]
\newtheorem{lemma}[theorem]{Lemma}

\definecolor{rot}{rgb}{0.000,0.000,0.000}

\textwidth 6.5in \textheight 8.6in \topmargin -0.20in
\oddsidemargin 0.0in

%% `Elsevier LaTeX' style
%%%%%%%%%%%%%%%%%%%%%%%

\begin{document}

\begin{frontmatter}

\title{An accurate boundary element method for the exterior elastic scattering problem in two dimensions}

\author[zju]{Gang Bao}
\ead{baog@zju.edu.cn}
\author[uestc,cq]{Liwei Xu}
\ead{xul@uestc.edu.cn}
\author[zju]{Tao Yin\corref{cor1}}
\ead{taoyin@zju.edu.cn}

\cortext[cor1]{Corresponding author.}
\address[zju]{School of Mathematical Sciences, Zhejiang University, Hangzhou 310027, China}
\address[uestc]{\textcolor{rot}{School of Mathematical Sciences, University of Electronic Science and Technology of China, Chengdu, Sichuan 611731, China}}
\address[cq]{\textcolor{rot}{Institute of Computing and Data Sciences, Chongqing University, Chongqing 401331, China}}

\begin{abstract}
This paper is concerned with a Galerkin boundary element method solving the two dimensional exterior elastic wave scattering problem. The original problem is first reduced to the so-called Burton-Miller (\cite{BM71}) boundary integral formulation, and essential mathematical features of its variational form are discussed. In numerical implementations, a newly-derived and analytically  accurate regularization formula (\cite{YHX}) is employed for the numerical evaluation  of hyper-singular boundary integral operator. A new computational approach is employed based on the series expansions of Hankel functions for the computation of weakly-singular boundary integral operators during the reduction of corresponding Galerkin equations into \textcolor{rot}{a} discrete linear \textcolor{rot}{system}. The effectiveness of  proposed numerical methods is demonstrated using several numerical examples.
\end{abstract}

\begin{keyword}
Boundary element method, elastic wave, Burton-Miller formulation, hypersingular boundary integral operator.
\MSC[2010] 45B05\sep 45P05\sep 65N38\sep 74S15
\end{keyword}

\end{frontmatter}

\section{Introduction}
\label{sec:1}
The elastic wave scattering problems in an unbounded domain have received much attention in both engineering and mathematical communities over the years since they are of great importance in diverse areas of applications such as geophysics, seismology and non-destructive testing. These problems present considerable mathematical and computational challenges such as the oscillating character of solutions and the unbounded domain to be considered. In this paper, we consider the elastic wave scattering of a time harmonic incident wave by an impenetrable and bounded obstacle in two dimensions. The elastic scattered field can be modeled by a time-harmonic Navier equation \textcolor{rot}{together with} an appropriate radiation condition at infinity and a boundary condition on the boundary of the obstacle.

Recently, various methods have been developed for the numerical solutions of elastic scattering problems. Among them, the boundary integral equation (BIE) method (\cite{HW08}), which has also been widely used in acoustics, electromagnetics and elastostatics (\cite{BET12,HW04,HX11,N01,N02}), takes some advantages over domain discretization methods since it is not necessary for the BIE method to impose any artificial boundary condition for the radiation condition, and the reduced \textcolor{rot}{BIEs} are only discretized on the boundary of the obstacle. To effectively reduce the \textcolor{rot}{BIE} into a linear system, many different solvers including the boundary element method (BEM) (\cite{MB88}), the Nystr\"ome method (\cite{TC07}), the fast multipole method \cite{CBS08,TC09} and the spectral method (\cite{L14}) have been considered.

In this paper, we are concerned with the Galerkin BEM for solving the two dimensional elastic scattering problem with Neumann boundary condition. There are many advantages for the application of Galerkin schemes to \textcolor{rot}{BIEs}, including the availability of full mathematical convergence analysis allowing $h$-$p$  approximations,  the optimality of the method. We first introduce a Burton-Miller BIE formulation (\cite{BM71})  solving the original boundary value problem. Indeed, this type of BIE inherits from the original problem the uniqueness of solution for all frequencies regardless of  types of boundary conditions or types of incident waves. Then, the corresponding variational equation is shown to admit a unique solution using the Fredholm's alternative. What we have to pay for the achievement of  uniqueness  is that the computational formulation consists of all four boundary integral operators corresponding to the time-harmonic Navier equation, including the singular and hyper-singular boundary integral operators.  As a result, how these classically non-integrable boundary integral operators are evaluated accurately and effectively is crucial for the treatment of elastic waves using BIE method. A semi-classical method based on local polar coordinates and a Laurent series expansion of the relevant integrand is applied in \cite{BLR14} to evaluate the hyper-singular integral in the sense of Cauchy principle value and Hadamard finite part sense. Another  idea to evaluate hyper-singular integrals is to reduce the order of singularity of hyper-singular boundary integrals. For this purpose, with the help of subtraction and addition of relevant terms, the hyper-singular boundary integral could be reduced to a form involving at most weakly singular integrals (\cite{LR93}). In this work, by utilizing the duality paring in weak forms and the tangential G\"unter derivative (\cite{HW08,KGBB79}),  we present a new and accurate regularization formula (\cite{YHX}) which replaces the weak form of  \textcolor{rot}{hyper-singular} boundary integral operator  by a coupling of several weakly-singular boundary integrals. As applying the Galerkin scheme to the variation equation, we simply use piecewise linear or constant basis functions, and linear approximations of integral curves. During the  computation of entries of the coefficient matrix in the reduced linear system, we propose a novel strategy based on the series expansions of Hankel functions and several special integrals to evaluate all weakly-singular integrals exactly. In addition, all non-singular integrals can be approximated using Gaussian quadrature rules, and all singular integrals are vanishing due to the fact of using line integral curves.

The rest of this paper is organized as follows. In Section 2, a Burton-Miller BIE formulation for the considered exterior elastic scattering problem is presented and the solvability of the corresponding variational equation is studied. In Section 3, we discuss the numerical procedures of the Galerkin scheme for approximating the variational equation and present a new strategy to evaluate all weakly-singular boundary integrals exactly through using series representations of special Hankel functions. Several numerical examples are presented in Section 4 to verify the efficiency and accuracy of the proposed numerical methods. The paper is concluded in Section 5 with some general conclusions and remarks for future work.

\section{Mathematical problems}
Let $\Omega\subset \R^2$ be a bounded, simply connected and impenetrable body with sufficiently smooth boundary $\Gamma=\partial\Omega$, and its exterior complement is denoted by $\Omega^c= \R^2\setminus\overline{\Omega}\subset \R^2$. The domain $\Omega^c$ is occupied by a linear and isotropic elastic solid determined through the Lam\'e constants $\lambda$ and $\mu$ ($\mu>0$, $\lambda+\mu>0$) and its mass density $\rho>0$. \textcolor{rot}{Denote by $\omega>0$} the frequency of propagating elastic waves. The problem to be considered is to determine the elastic displacement field ${\bb u}$ in the solid provided an incident field ${\bb u}^i$, and can be formulated as follows: Given ${\bb u}^i$, find ${\bb u}=(u_1,u_2)^\top\in (C^2(\Omega^c)\cap C^1(\overline{\Omega^c}))^2$ satisfying
\be
\label{Navier}
\Delta^{*}{\bb u} +   \rho \omega^2{\bb u} &=& {\bb 0}\quad\mbox{in}\quad \Omega^c,\\
\label{BoundCond}
{\bb T}({\bb u}+{\bb u}^i) &=& {\bb 0} \quad \mbox{on}\quad \Gamma,
\en
 and the Kupradze radiation condition (\cite{KGBB79})
\be
\label{RadiationCond}
\lim_{r \to \infty} r^{1/2}\left(\frac{\partial {\bf u}_t}{\partial r}-ik_t{\bf u}_t\right) = 0,\quad r=|x|,\quad t=p,s,
\en
 uniformly with respect to all $\hat{x}=x/|x|\in\mathbb{S}:=\{x=(x_1,x_2)^\top\in\R^2:|x|=1\}$.  Here, ${\bb u}={\bb u}_p+{\bb u}_s$, and the compressional wave ${\bb u}_p$ and the shear wave ${\bb u}_s$ are given by
\ben
{\bb u}_p=-\frac{1}{k_p^2}\mbox{grad}\,\mbox{div}\,{\bb u},\quad {\bb u}_s=\frac{1}{k_s^2}\overrightarrow{\mbox{curl}}\,\mbox{curl}\,{\bb u}
\enn
with
\ben
k_s = \omega\sqrt{\frac{\rho}{\mu}},\quad k_p = \omega \sqrt{\frac{\rho}{\lambda + 2\mu}},
\enn
and
\ben
\overrightarrow{\mbox{curl}}=\left(\frac{\pa}{\pa x_2}, -\frac{\pa}{\pa x_1}\right)^\top,\quad \mbox{curl}\,{\bb v}=\frac{\pa v_2}{\pa x_1} -\frac{\pa v_1}{\pa x_2},\quad {\bb{v}}=( v_1,  v_2)^\top.
\enn
In addition, $\Delta^{*}$ is the operator defined by
\be
\label{LameOper}
\Delta^* = \mu\,\mbox{div}\,\mbox{grad} + (\lambda + \mu)\,\mbox{grad}\, \mbox{div}\,,
\en
and ${\bb T}$ is the traction operator on the boundary given by
\be
\label{TracationOper}
{\bb T}{\bb u} = 2\mu\frac{\pa\bb u}{\pa\bb n} + \lambda (\mbox{div}\,{\bb u}){\bb n} +\mu\,{\bb n}\times\mbox{curl}\,{\bb u},
\en
\textcolor{rot}{where ${\bb n}$} is the outward unit normal to the boundary $\G$. For the uniqueness of the classical boundary value problem (\ref{Navier})-(\ref{RadiationCond}), we refer to \cite{KGBB79}.

%\ben
%\frac{{H_n^{(1)}}'(z)}{H_n^{(1)}(z)}=\frac{{H_{n-1}^{(1)}}(z)}{H_n^{(1)}(z)} -\frac{n}{z}
%\enn
%\ben
%{H_n^{(1)}}''(z)&=&{\left(H_{n-1}^{(1)}(z)-\frac{n}{z}H_n^{(1)}(z) \right)}'\\
%&=&\frac{n-1}{z}H_{n-1}^{(1)}(z)-H_n^{(1)}(z)-\frac{n}{z} \left(H_{n-1}^{(1)}(z)-\frac{n}{z}H_n^{(1)}(z) \right)+\frac{n}{z^2} H_n^{(1)}(z)\\
%&=& -\frac{1}{z}H_{n-1}^{(1)}(z) +\frac{n^2+n-z^2}{z^2} H_n^{(1)}(z)
%\enn
%Then
%\ben
%\frac{{H_n^{(1)}}''(z)}{H_n^{(1)}(z)}=-\frac{1}{z}\frac{{H_{n-1}^{(1)}}(z)}{H_n^{(1)}(z)} +\frac{n^2+n}{z^2}-1
%\enn
%According to
%\ben
%H_n^{(1)}(z)\sim -i\sqrt{\frac{2}{\pi n}} \left(\frac{ez}{2n}\right)^{-n},
%\enn
%(see NIST Handbook of Mathematical Functions Page 231)

%In this section, the original boundary value problem (\ref{Navier})-(\ref{RadiationCond}) is firstly reduced to the so-called Burton-Miller formulation consisting of all four boundary integral operators associated with the time-harmonic Navier equation. Then we established the solvability of the corresponding weak formulation and give a brief introduction to the descritization of Galerkin boundary element method.

\subsection{Boundary integral equations}
\label{sec:3}
It follows from Green's representation formula (\cite{HW08}) that the unknown functions \textcolor{rot}{${\bb u}$} can be represented in the form
\be
\label{DirectBRF}
{\bb u}(x)  =  \int_{\Gamma}({\bb T}_y{\bb E}(x,y))^\top{\bb u}(y)\,ds_y - \int_{\Gamma}{\bb E}(x,y){\bb t}(y)\,ds_y , \quad \forall\,x\in\Omega^c,
\en
where ${\bb t}=(\bb{Tu})|_\G$ and ${\bb E}(x,y)$ is the fundamental displacement tensor of the time-harmonic Navier equation (\ref{Navier}) taking the form
\be
\label{NavierFS}
{\bb E}(x,y) = \frac{1}{\mu}\gamma_{k_s}(x,y){\bb I} + \frac{1}{\rho\omega^2}\nabla_x\nabla_x\left[\gamma_{k_s}(x,y) - \gamma_{k_p}(x,y)\right],\quad x\ne y.
\en
In (\ref{NavierFS}) and the following, ${\bb I}$ denotes the identity matrix, and $\gamma_{k_t}(x,y)$ is the fundamental solution of the Helmholtz equation in $\R^2$ with wave number $k_t$\textcolor{rot}{ , i.e.,}
\be
\label{HelmholtzFS}
\gamma_{k_t}(x,y) = \frac{i}{4}H_0^{(1)}(k_t|x-y|),\quad x\ne y,\quad t=p,s,
\en
where $H_0^{(1)}(\cdot)$ is the first kind  Hankel function of order zero. Now letting  $x$ in equations  (\ref{DirectBRF}) approach to the boundary $\G$ and applying the jump conditions, we obtain the corresponding \textcolor{rot}{BIE} on $\G$
\be
\label{ElasticBIE1}
{\bb u}(x) = \left(\frac{1}{2}I + K\right){\bb u}(x)-V{\bb t}(x) ,\quad x\in\Gamma.
\en
Operating with the traction operator on (\ref{DirectBRF}), taking the limits as $x\to\G$ and applying the jump relations, we are led to the second \textcolor{rot}{BIE} on $\G$
\be
\label{ElasticBIE2}
{\bb t}(x) = \left(\frac{1}{2}I - K'\right){\bb t}(x) - W{\bb u}(x),\quad x\in\Gamma.
\en
In the \textcolor{rot}{BIEs} (\ref{ElasticBIE1})-(\ref{ElasticBIE2}), $I$ is the identity operator and the boundary integral operators for the elasticity are defined by
\be
\label{SolidBIO1}
V{\bb t}(x) &=& \int_{\Gamma}{\bb E}(x,y) {\bb t}(y)\,ds_y,\quad x\in\Gamma,\\
\label{SolidBIO2}
K{\bb u}(x) &=& \int_{\Gamma}({\bb T}_y{\bf E}(x,y))^\top{\bb u}(y)\,ds_y,\quad x\in\Gamma,\\
\label{SolidBIO3}
K'{\bb t}(x) &=& \int_{\Gamma}{\bb T}_x{\bf E}(x,y){\bb t}(y)\,ds_y,\quad x\in\Gamma,\\
\label{SolidBIO4}
W{\bb u}(x)  &=& -{\bb T}_x\int_{\Gamma}({\bb T}_y{\bb E}(x,y))^\top{\bb u}(y)\,ds_y,\quad x\in\Gamma.
\en
Here, $V$, $K$, $K'$ and $W$ denote the single-layer, double-layer, transpose of double-layer and hyper-singular boundary integral operators, respectively. By combining the \textcolor{rot}{BIEs} (\ref{ElasticBIE1})-(\ref{ElasticBIE2}), we obtain the so-called Burton-Miller formulation (\cite{BM71})
\be
\label{ElasticBIE3}
\left[W+i\eta\left(\frac{1}{2}I - K\right)\right]{\bb u}(x)+\left[\frac{1}{2}I + K'+i\eta V\right]{\bb t}(x)=0,\quad x\in\Gamma,
\en
where $\eta\ne 0$ is called the combination coefficient. Using the boundary condition \eqref{BoundCond},  the above formulation leads to
\be
\label{NBIE}
\left[W+i\eta\left(\frac{1}{2}I - K\right)\right]{\bb u}(x)=\left[\frac{1}{2}I + K'+i\eta V\right]({\bb T}{\bb u}^i)(x) =:{\bb f},\quad x\in\Gamma.
\en

\begin{theorem}
The boundary integral equation (\ref{NBIE}) is uniquely solvable.
\end{theorem}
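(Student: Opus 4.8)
The plan is to work in the natural energy trace spaces and to view the operator $A:=W+i\eta(\tfrac12 I-K)$ as a bounded map $A\colon H^{1/2}(\G)^2\to H^{-1/2}(\G)^2$, then to prove it is bijective. First I would record the (componentwise) mapping properties on the smooth curve $\G$, namely $V\colon H^{-1/2}\to H^{1/2}$, $K\colon H^{1/2}\to H^{1/2}$, $K'\colon H^{-1/2}\to H^{-1/2}$ and $W\colon H^{1/2}\to H^{-1/2}$, all bounded. Splitting the hyper-singular operator into its elastostatic principal part and a remainder with a weaker (smoothing) kernel yields a G\aa rding inequality $\real\langle W\bb u,\bb u\rangle\ge c\|\bb u\|_{H^{1/2}}^2-\langle C_1\bb u,\bb u\rangle$ with $c>0$ and $C_1$ compact, the finite-dimensional space of rigid motions being absorbed into $C_1$. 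Since $H^{1/2}(\G)^2$ embeds compactly into $H^{-1/2}(\G)^2$, the term $i\eta(\tfrac12 I-K)$, whose range lies in $H^{1/2}$, is compact as a map into $H^{-1/2}$. Hence $A$ is a compact perturbation of a coercive operator, so it is Fredholm of index zero, and by the Fredholm alternative it suffices to prove that $A$ is injective.

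The injectivity is the heart of the matter and the reason the combination $i\eta$ with $\eta\neq0$ is introduced. Suppose $\bb u_0\in H^{1/2}(\G)^2$ satisfies $A\bb u_0=\bb 0$. I would introduce the double-layer field $\bb w(x)=\int_\G(\bb T_y\bb E(x,y))^\top\bb u_0(y)\,ds_y$ for $x\in\R^2\setminus\G$, which solves the Navier equation in both $\Om$ and $\Om^c$ and satisfies the Kupradze radiation condition. Using the jump relations together with the operator definitions \eqref{SolidBIO2} and \eqref{SolidBIO4}, one has $\bb w^\pm|_\G=(\pm\tfrac12 I+K)\bb u_0$, the continuous traction $\bb T\bb w=-W\bb u_0$ on $\G$, and the jump $\bb w^+-\bb w^-=\bb u_0$. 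Substituting these into $A\bb u_0=\bb 0$ converts the homogeneous equation into the interior impedance condition $\bb T\bb w^-+i\eta\,\bb w^-=\bb 0$ on $\G$.

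Applying Betti's first formula to $\bb w$ on $\Om$ and using that the elastic energy density and $\rho\om^2|\bb w|^2$ are real shows that $\int_\G\overline{\bb w^-}\cdot\bb T\bb w^-\,ds$ is real; inserting the impedance condition gives $-i\eta\int_\G|\bb w^-|^2\,ds\in\R$, whence (for $\eta$ real and nonzero) $\bb w^-|_\G=\bb 0$ and therefore also $\bb T\bb w^-=\bb 0$. With both Cauchy data vanishing, unique continuation for the Navier system forces $\bb w\equiv\bb 0$ in $\Om$. Since the traction is continuous across $\G$, we then have $\bb T\bb w^+=\bb T\bb w^-=\bb 0$, so $\bb w$ is a radiating solution of the exterior Navier equation with vanishing Neumann data; uniqueness of the exterior problem, valid for every $\om>0$, gives $\bb w\equiv\bb 0$ in $\Om^c$ as well. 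The jump relation then yields $\bb u_0=\bb w^+-\bb w^-=\bb 0$, establishing injectivity and, combined with the Fredholm property, unique solvability.

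The main obstacle, and precisely the point of the Burton--Miller device, is this injectivity step: coupling the two integral equations through a genuinely imaginary factor is exactly what turns the homogeneous equation into an interior impedance problem admitting only the trivial solution, thereby removing the fictitious eigenfrequencies that would otherwise obstruct injectivity of each single equation. The technical care is concentrated in two places, namely justifying the G\aa rding inequality for the elastic hyper-singular operator $W$, and verifying that \emph{both} Cauchy data of $\bb w$ vanish so that unique continuation is applicable.
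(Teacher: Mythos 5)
Your proposal is correct and takes essentially the same route as the paper's own proof: the identical double-layer ansatz converts the homogeneous equation into an interior impedance problem, a Betti (energy) identity forces the interior Cauchy data to vanish, traction continuity plus uniqueness of the exterior radiating Neumann problem kills the exterior trace, and the jump relation gives ${\bb u}_0={\bb 0}$. The only differences are inessential: you fold in the G\aa rding/Fredholm machinery that the paper defers to its Theorems 2.2--2.3, and your interior unique-continuation step is superfluous, since the final jump argument needs only the vanishing of the boundary traces of ${\bb w}$, not its vanishing throughout $\Omega$.
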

\begin{proof}
It is sufficient to prove that the corresponding homogeneous equation of (\ref{NBIE}) has only the trivial solution. Suppose that \textcolor{rot}{${\bb u}_0$} is a solution of the corresponding
homogeneous equation of (\ref{NBIE}). Let
\ben
{\bb u}_i(x)  =  \int_{\Gamma}({\bb T}_y{\bb E}(x,y))^\top{\bb u}_0(y)\,ds_y, \quad \forall\,x\in\Omega.
\enn
Then we obtain from the homogeneous form of (\ref{NBIE}) that
\ben
{\bb Tu}_i-i\eta{\bb u}_i=0 \quad\mbox{on}\quad\Gamma.
\enn
Applying Betti's formula to ${\bb u}_i$ and its complex conjugate $\ov{{\bb u}_i}$ we obtain
\ben
0 &=& \int_{\Omega}\left({\bb u}_i\cdot\Delta^*\ov{{\bb u}_i}-\ov{{\bb u}_i}\cdot\Delta^*{\bb u}_i\right)dx\\
&=& \int_{\Gamma}\left({\bb u}_i\cdot{\bb T}\ov{{\bb u}_i}-\ov{{\bb u}_i}\cdot{\bb T}{\bb u}_i\right)ds\\
&=& -2i\eta\int_{\Gamma}|{\bb u}_i|^2\,ds.
\enn
Then it follows that ${\bb u}_i=0$ on $\Gamma$ and therefore, ${\bb Tu}_i=0$ on $\Gamma$. On the other hand, let
\ben
{\bb u}_e(x)  =  \int_{\Gamma}({\bb T}_y{\bb E}(x,y))^\top{\bb u}_0(y)\,ds_y, \quad \forall\,x\in\Omega^c.
\enn
It holds that on $\Gamma$
\ben
{\bb Tu}_e={\bb Tu}_i=0.
\enn
Then the uniqueness for the exterior elastic scattering problem implies that ${\bb u}_e=0$ in $\Omega^c$ and hence ${\bb u}_e=0$ on $\Gamma$. Evaluating the jump on the boundary $\Gamma$, we have
\ben
{\bb u}_0={\bb u}_e|_{\Gamma}-{\bb u}_i|_{\Gamma}=0 .
\enn
The proof now is complete.
\end{proof}

\subsection{Weak formulation}
The standard weak formulation of (\ref{NBIE}) reads:  Given ${\bb T}{\bb u}^i\in(H^{-1/2}(\G))^2$, find ${\bb u}\in (H^{1/2}(\G))^2$ such that
\be
\label{weak}
A({\bb u},{\bb v})=F({\bb v}) \quad\mbox{for all}\quad {\bb v}\in(H^{1/2}(\G))^2,
\en
where the sesquilinear form $A(\cdot\,,\,\cdot):(H^{1/2}(\G))^2\times (H^{1/2}(\G))^2\mapsto \C$ is defined by
\be
\label{sesquilinear}
A({\bb u},{\bb v})=\left\langle \left[W+i\eta\left(\frac{1}{2}I - K\right)\right]{\bb u}, {\bb v} \right\rangle,
\en
and the linear functional $F({\bb v})$ on $(H^{1/2}(\G))^2$ is defined by
\ben
F({\bb v})=\left\langle {\bb f},{\bb v} \right\rangle.
\enn
Here, $\left\langle \cdot,\cdot \right\rangle$ is the standard $L^2$ duality pairing between $(H^{-1/2}(\G))^2$ and $(H^{1/2}(\G))^2$.

\begin{theorem}
The sesquilinear form (\ref{sesquilinear}) satisfies a G\aa rding's inequality in the form
\be
\label{garding}
\mbox{Re}\,\{A({\bb u},{\bb u})\}\ge \alpha\|{\bb u}\|_{(H^{1/2}(\Gamma))^2}-\beta\|{\bb u}\|_{(H^{1/2-\epsilon}(\Gamma))^2},
\en
for all ${\bb u}\in (H^{1/2}(\G))^2$. Here, $\mbox{Re}\{\,\}$ implies the real part, and  $\alpha>0$, $\beta\ge 0$ and $0<\epsilon<1/2$ are all constants.
\end{theorem}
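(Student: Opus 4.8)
The plan is to regard the hyper-singular operator $W$ as the principal part of the form and to treat the remaining term $i\eta\left(\frac{1}{2}I-K\right)$ as a lower-order perturbation. Taking real parts in (\ref{sesquilinear}),
\[
\mbox{Re}\{A(\bb u,\bb u)\}=\mbox{Re}\{\langle W\bb u,\bb u\rangle\}+\mbox{Re}\left\{i\eta\left\langle\left(\frac{1}{2}I-K\right)\bb u,\bb u\right\rangle\right\},
\]
so it suffices to establish (i) a coercivity estimate for $\langle W\bb u,\bb u\rangle$ in $(H^{1/2}(\G))^2$ and (ii) that every remaining operator is bounded from $(H^{1/2}(\G))^2$ into itself, so that its pairing with $\bb u$ is controlled by $\|\bb u\|_{(H^{1/2}(\G))^2}\|\bb u\|_{(H^{1/2-\epsilon}(\G))^2}$ and hence absorbable. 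Since $\eta$ is a fixed nonzero constant, only the modulus of the second pairing matters.

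For (i) I would compare the dynamic operator with its static (Lam\'e) counterpart. Inserting the small-argument behaviour $H_0^{(1)}(k_t r)=\frac{2i}{\pi}\log r+(\text{smooth in }r^2)$ into (\ref{HelmholtzFS})--(\ref{NavierFS}) shows that the leading singularity of $\bb E(x,y)$ coincides with that of the Kelvin fundamental tensor of elastostatics, the remainder being less singular. Accordingly I would split $W=W_0+(W-W_0)$, where $W_0$ is the hyper-singular operator of the static Lam\'e system and $W-W_0$ is a pseudodifferential operator of order $\le 0$, hence bounded on $(H^{1/2}(\G))^2$. For $W_0$, the G\aa rding inequality
\[
\mbox{Re}\{\langle W_0\bb u,\bb u\rangle\}\ge\alpha_0\|\bb u\|_{(H^{1/2}(\G))^2}^2-\beta_0\|\bb u\|_{(L^2(\G))^2}^2
\]
is classical (\cite{HW08,KGBB79}) and reflects that $W_0$ is a pseudodifferential operator of order $+1$ with positive-definite principal symbol; this furnishes the coercive term.

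For (ii) I would use that on the smooth curve $\G$ the elastic double-layer operator $K$ is a pseudodifferential operator of order $0$, hence bounded on $(H^{1/2}(\G))^2$, as are the identity and the remainder $W-W_0$. For any such bounded $B$, the image $B\bb u$ lies in $(H^{1/2}(\G))^2$, which embeds continuously into $(H^{-1/2+\epsilon}(\G))^2$; viewing the pairing as the duality between $(H^{-1/2+\epsilon}(\G))^2$ and $(H^{1/2-\epsilon}(\G))^2$ gives
\[
|\langle B\bb u,\bb u\rangle|\le\|B\bb u\|_{(H^{-1/2+\epsilon}(\G))^2}\|\bb u\|_{(H^{1/2-\epsilon}(\G))^2}\le C\|\bb u\|_{(H^{1/2}(\G))^2}\|\bb u\|_{(H^{1/2-\epsilon}(\G))^2},
\]
and Young's inequality bounds the right-hand side by $\delta\|\bb u\|_{(H^{1/2}(\G))^2}^2+C_\delta\|\bb u\|_{(H^{1/2-\epsilon}(\G))^2}^2$ for any $\delta>0$.

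Collecting the estimates and applying the last bound to $W-W_0$ and to $i\eta(\frac{1}{2}I-K)$, I would choose $\delta$ small (e.g.\ $\delta=\alpha_0/4$) to absorb the $\|\bb u\|_{(H^{1/2}(\G))^2}^2$ contributions into the coercive estimate for $W_0$; after dominating the $L^2$ norm by $\|\bb u\|_{(H^{1/2-\epsilon}(\G))^2}$ (valid since $0<1/2-\epsilon$), this yields (\ref{garding}) with $\alpha=\alpha_0/2$ and a suitable $\beta\ge0$. I expect the genuine difficulty to lie entirely in step (i): isolating the leading logarithmic singularity of the Navier fundamental tensor and proving positive-definiteness of the principal symbol of $W_0$, equivalently the static G\aa rding inequality. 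Once that coercivity is secured, the boundedness of $K$ and of the dynamic-minus-static remainder, together with the Young/absorption argument, are routine.
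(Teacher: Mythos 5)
Your proof is correct, and its top-level skeleton --- coercivity of the principal part $W$ plus lower-order control of the perturbation $i\eta\left(\frac{1}{2}I-K\right)$ --- is the same as the paper's; the execution of both pieces, however, differs genuinely. For the hyper-singular part, the paper does not prove the G\aa rding inequality for $W$ at all: it invokes the estimate (\ref{Garding1}) directly from \cite{HKR00}. So the step you identify as ``the genuine difficulty'' --- splitting $W=W_0+(W-W_0)$ into the static Lam\'e operator plus a smoothing remainder and proving positive-definiteness of the principal symbol of $W_0$ --- is precisely what the cited reference supplies; your sketch in effect reconstructs the argument behind the citation, which makes your proof more self-contained but dependent on pseudodifferential machinery the paper never deploys. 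For the perturbation, the paper's bound is simpler and sharper than yours: using only that $\frac{1}{2}I-K$ is bounded on $(H^{0}(\G))^2$ and that $\|{\bb u}\|_{(H^{0}(\G))^2}\le\|{\bb u}\|_{(H^{1/2-\epsilon}(\G))^2}$, it gets $\left|\left\langle i\eta\left(\frac{1}{2}I-K\right){\bb u},{\bb u}\right\rangle\right|\le c\,\|{\bb u}\|^2_{(H^{1/2-\epsilon}(\G))^2}$, a purely lower-order bound; no Young/absorption step is needed and the coercivity constant $\alpha$ coming from $W$ is preserved intact. Your duality pairing $(H^{-1/2+\epsilon}(\G))^2\times(H^{1/2-\epsilon}(\G))^2$ instead produces the mixed term $\|{\bb u}\|_{(H^{1/2}(\G))^2}\|{\bb u}\|_{(H^{1/2-\epsilon}(\G))^2}$, which forces an absorption argument and degrades the constant to $\alpha_0/2$. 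Both routes are sound; the paper's buys brevity (six lines) by citation plus an elementary $L^2$ estimate, while yours buys a self-contained account of where the coercivity actually comes from.
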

\begin{proof}
From the estimates in \cite{HKR00} we know that
\be
\label{Garding1}
\mbox{Re}\,\{\langle W{\bf u}, {\bf u}\rangle\} \ge \alpha\|{\bf u}\|_{(H^{1/2}(\Gamma))^2}^2 - \beta\|{\bf u}\|_{(H^{1/2-\epsilon}(\Gamma))^2}^2
\en
for some constants $\alpha> 0$, $\beta\ge 0$ and $0<\epsilon<1/2$. On the other hand, we have
\ben
\left|\left\langle i\eta\left(\frac{1}{2}I - K\right){\bb u}, {\bb u} \right\rangle\right| &\le& |\eta|\left\|\left(\frac{1}{2}I - K\right){\bb u}\right\|_{(H^0(\Gamma))^2} \|{\bb u}\|_{(H^0(\Gamma))^2}\\
&\le& c\|{\bb u}\|_{(H^0(\Gamma))^2}^2\\
&\le& c\|{\bb u}\|_{(H^{1/2-\epsilon}(\Gamma))^2}^2,
\enn
which implies that
\be
\label{Garding2}
\mbox{Re}\,\left\{\left\langle i\eta\left(\frac{1}{2}I - K\right){\bb u}, {\bb u} \right\rangle\right\}\ge -c\|{\bb u}\|_{(H^{1/2-\epsilon}(\Gamma))^2},
\en
where $c>0$ and $0<\epsilon<1/2$ are all constants. Therefore, the combination of inequalities (\ref{Garding1}) and (\ref{Garding2}) gives the G\aa rding's inequality (\ref{garding}) immediately, and this completes the proof.
\end{proof}

Now, the existence result follows immediately from the Fredholm's Alternative: uniqueness implies existence. Therefore, we have the following theorem.

\begin{theorem}
The variational equation (\ref{weak}) admits a unique solution.
\end{theorem}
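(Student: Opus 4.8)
The plan is to deduce the result directly from the abstract theory of variational problems governed by a G\aa rding inequality, combined with the uniqueness already in hand. First I would reformulate (\ref{weak}) as an operator equation: the bounded sesquilinear form $A(\cdot\,,\,\cdot)$ induces a bounded linear operator $\mathcal{A}:(H^{1/2}(\G))^2\to (H^{-1/2}(\G))^2$ via $\langle \mathcal{A}{\bb u},{\bb v}\rangle = A({\bb u},{\bb v})$, with boundedness inherited from the mapping properties of $W$, $K$ and the identity. The variational problem is then equivalent to $\mathcal{A}{\bb u}={\bb f}$.

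Next I would invoke the G\aa rding inequality (\ref{garding}) proved in the preceding theorem. Since $\eps>0$, the embedding $(H^{1/2}(\G))^2\hookrightarrow (H^{1/2-\eps}(\G))^2$ is compact by Rellich's theorem, so (\ref{garding}) expresses exactly that $\mathcal{A}$ is a compact perturbation of a coercive, hence boundedly invertible, operator. By the standard functional-analytic consequence of this splitting, $\mathcal{A}$ is a Fredholm operator of index zero.

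For a Fredholm operator of index zero, injectivity and surjectivity are equivalent; this is the Fredholm alternative. It therefore remains only to check injectivity of $\mathcal{A}$, i.e.\ that $A({\bb u},{\bb v})=0$ for all ${\bb v}\in(H^{1/2}(\G))^2$ forces ${\bb u}={\bb 0}$. But this is precisely the assertion that the homogeneous form of (\ref{NBIE}) admits only the trivial solution, which was already established in the uniqueness theorem above. Consequently $\mathcal{A}$ is injective, hence bijective with bounded inverse, and (\ref{weak}) possesses a unique solution ${\bb u}\in(H^{1/2}(\G))^2$ depending continuously on ${\bb f}$.

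The step requiring the most care is the passage from the G\aa rding inequality to the Fredholm property: one must pin down the operator-theoretic setting precisely, namely the $L^2$ duality pairing of the Gelfand triple and the decomposition of $\mathcal{A}$ into a coercive principal part plus a compact remainder arising from the lower-order norm $\|\cdot\|_{(H^{1/2-\eps}(\G))^2}$, so that the Fredholm alternative genuinely applies. Once this abstract framework is in place, the conclusion is immediate from the already-proved uniqueness, which is why the existence follows, as stated, at once from the Fredholm alternative.
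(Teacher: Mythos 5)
Your proposal is correct and takes essentially the same route as the paper: the paper's own proof is precisely the one-line appeal to the Fredholm alternative (``uniqueness implies existence''), resting on the G\aa rding inequality (\ref{garding}) and the uniqueness result for (\ref{NBIE}). You have simply made explicit the operator-theoretic details (the induced operator $\mathcal{A}$, the compact embedding via Rellich, and the index-zero Fredholm property) that the paper leaves implicit.
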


\section{Numerical schemes}
In this section, we discuss numerical procedures for approximating the variational equation \eqref{weak}. First, we present a Galerkin equation corresponding to  \eqref{weak}, and then arrive at a discrete linear system through using \textcolor{rot}{linear basis as} test functions, and linear approximations of integral curves. Two new techniques are to be utilized  during the discretization of the Galerkin equation. One is to apply a new regularization formula (\cite{YHX}) to the \textcolor{rot}{hyper-singular} boundary integral operator \eqref{SolidBIO4}, and as a result, only weakly-singular terms are remained in its practical computational formulations. The other is to compute all weakly-singular boundary integrals through using series representations of special Hankel functions. With the help of  the second technique, together with the fact of using line integral curves, all weakly-singular boundary integrals could be evaluated exactly, and all singular boundary integrals are vanishing. We point out that  low-order basis functions are not preconditions for the employment of these two techniques, and actually, coupled with these two techniques,  basis functions of any order  could be adopted in simulations to realize the \textcolor{rot}{$p$-version} of boundary element methods.

\subsection{Boundary element methods}

Let $\mathcal{H}_h$ be a finite dimensional subspace of $(H^{1/2}(\G))^2$. We consider the following problem: Given ${\bb T}{\bb u}^i$, find ${\bb u}_h\in \mathcal{H}_h$ such that
\be
\label{Galerkin}
A({\bb u}_h,{\bb v}_h)=F({\bb v}_h) \quad\mbox{for all}\quad {\bb v}_h\in\mathcal{H}_h.
\en
In particular, (\ref{Galerkin}) is known as the Galerkin approximation of (\ref{weak}). We refer to \cite{HW04} for fundamental  features of \eqref{Galerkin}, including the well-posedness and the numerical error bounds. In this work,  we only describe  a brief procedure of reducing the Galerkin equation (\ref{Galerkin}) to its discrete linear system of equations.
Let $x_i$, $i=1,2,...,N$ be  discretion points on $\G$,  and $\G_i$ be the line segment between $x_i$ and $x_{i+1}$. Here, we set
\ben
\G_{N+1}=\G_1,\quad \G_{-1}=\G_{N},\quad x_{N+1}=x_1,\quad x_{-1}=x_{N}.
\enn
The outward unit normal and tangential to the boundary $\G_i$ are given respectively by
\ben
{\bf n}_{\G_i}=-{\bf N}\frac{x_{i+1}-x_i}{|\G_i|}, \quad {\bf t}_{\G_i}=\frac{x_{i+1}-x_i}{|\G_i|},\quad|\G_i|=|x_{i+1}-x_i|,\quad {\bf N}=\begin{bmatrix}
0&{ - 1}\\
1&0
\end{bmatrix}.
\enn
Then the boundary $\G$ is approximated by
\ben
\tilde{\G}:=\bigcup_{i=1}^N\G_i.
\enn
For $x\in\G_i, i=1,...,N$, we introduce
\ben
x=x(\xi)=x_i+\frac{1+\xi}{2}(x_{i+1}-x_i),\quad \xi\in[-1,1].
\enn
Let $\{\varphi_i\}, i=1,2,...,N$ be piecewise linear basis functions of $\mathcal{H}_h$. We choose them as for $i=1,2,...,N$,
\ben
\varphi_i(x)=\varphi_i(x(\xi))=
\begin{cases}
\frac{1+\xi}{2}, & x(\xi)\in\G_{i-1},
\cr \frac{1-\xi}{2} , & x(\xi)\in\G_i,
\cr 0 , & \mbox{otherwise}.
\end{cases}
\enn
We seek the approximate solution \textcolor{rot}{${\bb u}_h$} in the forms
\ben
\textcolor{rot}{{\bb u}_h}(x)=\sum_{i=1}^N {\bf u}_i\varphi_i(x),
\enn
where ${\bf u}_i\in\C^2$, $i=1,...,N$ are unknown nodal values of \textcolor{rot}{${\bb u}_h$} at $x_i$. The given Cauchy data \textcolor{rot}{${\bb T}{\bb u}^i$} is interpolated with the form
\ben
\textcolor{rot}{{\bb T}{\bb u}^i(x)}=\sum_{i=1}^N {\bf g}_i\psi_i(x),
\enn
where ${\bf g}_i$, $i=1,...,N$ are known nodal values of \textcolor{rot}{${\bb Tu}^i$} at $x_i$, and $\{\psi_i\}, i=1,2,...,N$ are piecewise constant basis functions defined as
\ben
\psi_i(x)=\psi_i(x(\xi))=
\begin{cases}
1 , & x(\xi)\in\G_i,
\cr 0 , & \mbox{otherwise}.
\end{cases}
\enn
Substituting these interpolation forms into (\ref{Galerkin}) and setting $\varphi_i, i=1,2,...,N$ as test functions, we arrive at a linear system of equations
\be
\label{linearsys}
{\bf A}_h {\bf X}={\bf B}_h{\bf b},\quad{\bf A}_h\in\C^{2N\times 2N},\quad{\bf B}_h\in\C^{2N\times 1},
\en
where
\ben
{\bf A}_h=-\frac{1}{2}{\bf I}_{1h}+{\bf K}_{h}+\eta{\bf W}_{h},\quad
{\bf B}_h=-\left[{\bf V}_{h}-\eta\left( \frac{1}{2}{\bf I}_{2h}+ {\bf K}'_{h} \right)\right],
\enn
and
\ben
{\bf X}  = ({\bf u}_1^\top,{\bf u}_2^\top,...,{\bf u}_N^\top)^\top, \quad
{\bf b} = ({\bf g}_1^\top,{\bf g}_2^\top,...,{\bf g}_N^\top)^\top.
\enn
The entries ($\in\C^{2\times 2}$) of the corresponding matrixes are defined by
\be
\label{entry1}
{\bf V}_{h}(i,j) &=& \int_{\tilde{\G}} (V\psi_j)\varphi_i\,ds, \quad
{\bf W}_{h}(i,j) = \int_{\tilde{\G}} (W\varphi_j)\varphi_i\,ds,\\
\label{entry2}
{\bf K}_{h}(i,j) &=& \int_{\tilde{\G}} (K\varphi_j)\varphi_i\,ds, \quad
{\bf K}'_{h}(i,j) = \int_{\tilde{\G}} (K'\psi_j)\varphi_i\,ds,\\
\label{entry3}
{\bf I}_{1h}(i,j) &=& \int_{\tilde{\G}} \varphi_j\varphi_i\,ds\,\textcolor{rot}{\bb I}, \quad
{\bf I}_{2h}(i,j) = \int_{\tilde{\G}} \psi_j\varphi_i\,ds\,\textcolor{rot}{\bb I}.
\en

\subsection{Regularized formulations}

It can be seen that the \textcolor{rot}{BIE} (\ref{NBIE}) consists of both singular and \textcolor{rot}{hyper-singular} boundary integral operators, and extra treatments are needed for its numerical simulations. Thanks to the variational form and the tangential G\"unter derivative, a new regularization \textcolor{rot}{formula} for the \textcolor{rot}{hyper-singular} boundary integral operator has recently been derived  in \cite{YHX}, and takes the form
\be
\label{Ws}
\langle W{\bf w},{\bf v}\rangle &=&\mu k_{s}^2\int_{\Gamma}\int_{\Gamma}{\left(\left[{\bf n}_{x}{\bf n}_{y}^\top R-{\bf n}_{x}^\top{\bf n}_{y}\textcolor{rot}{\bb I}\gamma _{k_{s}}+{\bf N}{\bf n}_{x}^\top {\bf t}_{y}\gamma _{k_{s}}\right] {\bf w}(y)\right)^\top\ov{\bf v}(x)ds_{y}ds_x }\nonumber \\
&+&4\mu ^{2}\int_{\Gamma}\int_{\Gamma} {\left(\textcolor{rot}{\bb E}(x,y)\frac{{d{\bf w}(y)}}{{ds_{y}} }\right)^\top\frac{d\ov{\bf v}(x)}{ds_x}ds_{y}ds_x}\nonumber \\
&-&\frac{{4\mu ^{2}}}{{\lambda + 2\mu} }\int_{\Gamma}\int_{\Gamma} {\gamma_{k_p}(x,y)\frac{{d{\bf w}(y)^\top}}{{ds_{y}} }\frac{d\ov{\bf v}(x)}{ds_x}ds_{y}ds_x} \nonumber\\
&+&2\mu \int_{\Gamma}\int_{\Gamma}\cdot  {\left({\bf n}_{x} \nabla _{x}^\top R(x,y){\bf N}\frac{{d{\bf w}(y)}}{{ds_{y}} }\right)^\top\ov{\bf v}(x)ds_{y}ds_x}\nonumber\\
&-& 2\mu\int_{\Gamma}\int_{\Gamma}{\left({\bf N}\nabla_{y} R( x,y){\bf n}_{y}^\top{\bf w}(y)\right)^\top\frac{d\ov{\bf v}(x)}{ds_x}ds_{y}ds_x},
\en
where
\ben
R(x,y)=\gamma_{k_s}(x,y)-\gamma_{k_p}(x,y).
\enn
Clearly, only weakly-singular kernels are involved in the \textcolor{rot}{regularization formula} (\ref{Ws}). On the other hand, according to Theorem A.3 in \cite{YHX}, we know that
\ben
\textcolor{rot}{{\bb T}_x{\bb E}}(x,y) &=& - {\bf n}_x \nabla_x^\top R(x,y) + \frac{\partial\gamma_{k_s}(x,y)}{\partial n_x}\textcolor{rot}{\bb I}+ {\bf N}\frac{d\left[2\mu\textcolor{rot}{\bb E}(x,y) - \gamma_{k_s}(x,y)\textcolor{rot}{\bb I}\right]}{ds_x},\\
\textcolor{rot}{{\bb T}_y{\bb E}}(x,y) &=& - {\bf n}_y \nabla_y^\top R(x,y) + \frac{\partial\gamma_{k_s}(x,y)}{\partial n_y}\textcolor{rot}{\bb I}+ {\bf N}\frac{d\left[2\mu\textcolor{rot}{\bb E}(x,y) - \gamma_{k_s}(x,y)\textcolor{rot}{\bb I}\right]}{ds_y}.
\enn
Then integration by parts implies that
\be
\label{Ks}
\left\langle K{\bf w},{\bf v} \right\rangle &=& \int_{\Gamma} \int_{\Gamma}  {\frac{{\partial \gamma _{k_{s}}(x,y) } }{{\partial n_{y}
}}{\bf w}(y)^\top\ov{\bf v}(x)ds_{y}ds_x}  \nonumber\\
&-& \int_{\Gamma}\int_{\Gamma}  {\left(\nabla_{y} R( x,y){\bf n}_{y}^\top {\bf w}(y)\right)^\top\ov{\bf v}(x)ds_{y}ds_x} \nonumber\\
&+&\int_{\Gamma}\int_{\Gamma}  {\left(\left[2\mu \textcolor{rot}{\bb E}( x,y) - \gamma _{k_{s}}(x,y)\textcolor{rot}{\bb I}\right]{\bf N}\frac{d{\bf w}(y)}{ds_y}\right)^\top \ov{\bf v}(x)ds_{y}ds_x}.
\en
and
\be
\label{Ksp}
\left\langle K'{\bf w},{\bf v} \right\rangle &=& \int_{\Gamma} \int_{\Gamma}{\frac{{\partial \gamma _{k_{s}}(x,y) } }{{\partial n_x
}}{\bf w}(y)^\top\ov{\bf v}(x)ds_{y}ds_x} \nonumber\\
&-& \int_{\Gamma}\int_{\Gamma}  {\left({\bf n}_x\nabla_x^\top R( x,y) {\bf w}(y)\right)^\top\ov{\bf v}(x)ds_{y}ds_x} \nonumber\\
&+&\int_{\Gamma}\int_{\Gamma}  {\left({\bf N}\left[2\mu \textcolor{rot}{\bb E}( x,y) - \gamma _{k_{s}}(x,y)\textcolor{rot}{\bb I}\right]{\bf w}(y)\right)^\top\frac{d\ov{\bf v}(x)}{ds_x}ds_yds_x}.
\en
Observe that the first terms  in  (\ref{Ks}) and (\ref{Ksp}), consisting of singular kernels, are consistent with the weak forms of double-layer and transpose of double-layer boundary integral operators associated with Helmholtz equations (\cite{HX11}), respectively, and the remaining terms are all weakly-singular.

\subsection{Computational formulations}
Recall that
\ben
\textcolor{rot}{\bb E}(x,y) = \frac{1}{\mu}\gamma_{k_s}(x,y)\textcolor{rot}{\bb I} + \frac{1}{\rho\omega^2}\nabla_x\nabla_x R(x,y),\quad x\ne y.
\enn
A direct calculation gives
\be
\label{RepE}
\textcolor{rot}{\bb E}(x,y)&=&  \frac{i}{4\mu}H_0^{(1)}(k_s|x-y|)\textcolor{rot}{\bb I}\nonumber\\
&-& \frac{i}{4\rho\omega^2|x-y|} \left[k_sH_1^{(1)}(k_s|x-y|)-k_pH_1^{(1)}(k_p|x-y|)\right]\textcolor{rot}{\bb I} \nonumber\\
&+& \frac{i(x-y)(x-y)^\top}{4\rho\omega^2|x-y|^2} \left[k_s^2H_2^{(1)}(k_s|x-y|)-k_p^2H_2^{(1)}(k_p|x-y|)\right] ,\quad x\ne y.
\en
From the series representation of   bessel functions $J_n(\cdot)$ and $Y_n(\cdot)$ in \cite{AS72}, we arrive at the following Lemma.
\begin{lemma}
\label{series}
For $x\ne y$, we have the following representation
\be
\label{rep1}
&\quad& k_sH_1^{(1)}(k_s|x-y|)-k_pH_1^{(1)}(k_p|x-y|) \nonumber\\
&=& \sum_{m=0}^\infty \left( C_m^1|x-y|^{2m+1} +C_m^2|x-y|^{2m+1}\ln|x-y|\right),\\
\label{rep2}
&\quad &k_s^2H_2^{(1)}(k_s|x-y|)-k_p^2H_2^{(1)}(k_p|x-y|)\nonumber\\
&=& \sum_{m=0}^\infty \left( C_m^3|x-y|^{2m+2} +C_m^4|x-y|^{2m+2}\ln|x-y|\right)-\frac{i(k_s^2-k_p^2)}{\pi},\\
\label{rep3}
&\quad& H_0^{(1)}(k|x-y|) \nonumber\\
&=& \sum_{m=0}^\infty \left[ \left(C_m^5+C_m^6\ln\frac{k}{2}\right) k^{2m}|x-y|^{2m} +C_m^6k^{2m}|x-y|^{2m}\ln|x-y|\right].
\en
Here,
\ben
C_m^1 &=&
\begin{cases}
\frac{k_s^2-k_p^2}{2}\left(1+\frac{2ic_e}{\pi}-\frac{i}{\pi}\right) +\frac{2i}{\pi}\left(k_s^2\ln\frac{k_s}{2}-k_p^2\ln\frac{k_p}{2}\right), &m=0 ,
\cr \left[ {\begin{array}{*{20}{c}}
\frac{(-1)^m\left(k_s^{2m+2}-k_p^{2m+2}\right)}{2^{2m+1}m!(m+1)!} \left[1+\frac{2ic_e}{\pi}-\frac{i}{\pi}\left( 2\sum_{l=1}^m\frac{1}{l} +\frac{1}{m+1}\right)\right]\\
+\frac{2i}{\pi}\left(k_s^{2m+2}\ln\frac{k_s}{2}-k_p^{2m+2}\ln\frac{k_p}{2}\right)
\end{array}} \right], &m\ge1,
\end{cases}\\
C_m^2 &=& \frac{2i(-1)^m}{\pi 2^{2m+1}m!(m+1)!}\left(k_s^{2m+2}-k_p^{2m+2}\right) ,\quad m\ge0,\\
C_m^3 &=&
\begin{cases}
\frac{k_s^4-k_p^4}{8}\left(1+\frac{2ic_e}{\pi}-\frac{3i}{2\pi}\right) +\frac{2i}{\pi}\left(k_s^4\ln\frac{k_s}{2}-k_p^4\ln\frac{k_p}{2}\right), &m=0 ,
\cr \left[ {\begin{array}{*{20}{c}}
\frac{(-1)^m\left(k_s^{2m+4}-k_p^{2m+4}\right)}{2^{2m+2}m!(m+2)!} \left[1+\frac{2ic_e}{\pi}-\frac{i}{\pi}\left( 2\sum_{l=1}^m\frac{1}{l} +\frac{1}{m+2}\right)\right] \\
+\frac{2i}{\pi}\left(k_s^{2m+4}\ln\frac{k_s}{2}-k_p^{2m+4}\ln\frac{k_p}{2}\right)
\end{array}} \right], &m\ge1,
\end{cases}\\
C_m^4 &=& \frac{2i(-1)^m}{\pi 2^{2m+3}m!(m+3)!}\left(k_s^{2m+4}-k_p^{2m+4}\right) ,\quad m\ge0,\\
C_m^5 &=&
\begin{cases}
1+\frac{2ic_e}{\pi}, &m=0 ,
\cr \frac{(-1)^m}{2^{2m}m!m!} \left[1+\frac{2ic_e}{\pi}-\frac{2i}{\pi}\sum_{l=1}^m\frac{1}{l}\right], &m\ge1,
\end{cases}\\
C_m^6 &=& \frac{2i(-1)^m}{\pi 2^{2m}m!m!},\quad m\ge0,
\enn
with $c_e$ being the Euler constant.
\end{lemma}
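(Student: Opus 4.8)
The plan is to obtain all three identities by substituting the classical power-series expansions of the Bessel functions $J_n$ and $Y_n$ (as tabulated in \cite{AS72}) into $H_n^{(1)}=J_n+iY_n$, specializing the argument to $z=k_t|x-y|$, and subtracting the $k_p$-contribution from the $k_s$-contribution. First I would record the two standard expansions: the entire series
\[
J_n(z)=\sum_{m=0}^\infty \frac{(-1)^m}{m!(m+n)!}\left(\frac{z}{2}\right)^{2m+n},
\]
and the logarithmic series for $Y_n$, which for $n\ge 1$ carries a \emph{finite} part with negative powers $-\frac{1}{\pi}\sum_{m=0}^{n-1}\frac{(n-m-1)!}{m!}(z/2)^{2m-n}$, a term $\frac{2}{\pi}\ln(z/2)\,J_n(z)$, and a regular part whose coefficients involve the digamma values $\psi(m+1)$ and $\psi(m+n+1)$.

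The second step is to use $\ln(z/2)=\ln(k_t/2)+\ln|x-y|$ to split every logarithmic contribution into a piece whose coefficient is a constant (absorbing $\ln(k_t/2)$) and a piece carrying the geometric factor $\ln|x-y|$; and to rewrite $\psi(m+1)=-c_e+\sum_{l=1}^m 1/l$ so that the Euler constant $c_e$ and the harmonic sums $\sum_{l=1}^m 1/l$ appear explicitly. This is precisely what produces the two-part structure $C_m^j|x-y|^{p}+C_m^{j'}|x-y|^{p}\ln|x-y|$ seen in \eqref{rep1}--\eqref{rep3}, together with the bracketed factors of the form $1+\frac{2ic_e}{\pi}-\frac{i}{\pi}(\cdots)$.

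The decisive point is the cancellation of the non-integrable terms after multiplication by $k_t$ or $k_t^2$. For \eqref{rep1} the only singular contribution of $Y_1$ is $-\frac{2}{\pi z}=-\frac{2}{\pi k_t|x-y|}$; multiplying by $k_t$ removes the $k_t$-dependence, so the $k_s$- and $k_p$-terms are identical and cancel in the difference, leaving a series that starts at the power $|x-y|^{1}$. For \eqref{rep2} the finite part of $Y_2$ contributes both a $z^{-2}$ term and a constant $-1/\pi$; after multiplying by $k_t^2$ the $|x-y|^{-2}$ term again loses its $k_t$-dependence and cancels, whereas the constant becomes $-ik_t^2/\pi$ and does \emph{not} cancel, producing exactly the surviving term $-\frac{i(k_s^2-k_p^2)}{\pi}$. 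Verifying these cancellations and then reading off the surviving regular coefficients is the heart of the computation.

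Finally I would collect like powers of $|x-y|$ and match the coefficients of $|x-y|^{2m+1}$ and $|x-y|^{2m+1}\ln|x-y|$ for \eqref{rep1}, of $|x-y|^{2m+2}$ and $|x-y|^{2m+2}\ln|x-y|$ for \eqref{rep2}, and of $k^{2m}|x-y|^{2m}$ and $k^{2m}|x-y|^{2m}\ln|x-y|$ for \eqref{rep3}, thereby identifying $C_m^1,\dots,C_m^6$. The main obstacle is purely one of bookkeeping: keeping the digamma/harmonic-sum terms straight and, in particular, treating the $m=0$ cases separately from $m\ge 1$ (the harmonic sums being empty at $m=0$), which is exactly why $C_m^1$, $C_m^3$ and $C_m^5$ are defined by cases. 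No analytic difficulty beyond the termwise validity of these absolutely convergent expansions for $x\neq y$ is expected.
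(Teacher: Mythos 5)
Your proposal is correct and takes essentially the same route as the paper: the paper gives no proof beyond the remark that the lemma follows ``from the series representation of Bessel functions $J_n(\cdot)$ and $Y_n(\cdot)$'' in the cited handbook, and your outline --- writing $H_n^{(1)}=J_n+iY_n$, splitting $\ln(z/2)=\ln(k_t/2)+\ln|x-y|$, converting digamma values to Euler's constant plus harmonic sums, and verifying that the $z^{-1}$ and $z^{-2}$ singular parts cancel in the $k_s$--$k_p$ differences while the constant term of $Y_2$ survives as $-i(k_s^2-k_p^2)/\pi$ --- is precisely the computation the paper leaves implicit. Nothing is missing; your treatment of the $m=0$ versus $m\ge 1$ cases and of the surviving constant in \eqref{rep2} matches the structure of the stated coefficients.
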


Next, we introduce some useful integrals for $m\ge 0$:
\ben
I_m^1 &=& \int_{-1}^1 \int_{-1}^1(\xi_1-\xi_2)^{2m+1}\xi_1\,d\xi_1d\xi_2 =\sum_{l=0}^{2m+1}\frac{(-1)^{l+1}C_{2m+1}^l\left[1-(-1)^l\right]^2}{(l+2)(2m+2-l)},
\enn
\ben
I_m^2 &=& \int_{-1}^1 \int_{-1}^1(\xi_1-\xi_2)^{2m+1}\xi_1\ln|\xi_1-\xi_2|\,d\xi_1d\xi_2,\\
&=& \frac{1}{2(m+1)^2}\sum_{l=0}^{2m+2}\frac{C_{2m+2}^l\left[1-(-1)^l\right]}{l+2} +\frac{2^{2m+3}\ln2}{(m+2)(2m+3)}\\
&-&\frac{(6m^2+18m+13)2^{2m+3}}{(m+1)(2m+3)^2(m+2)},
\enn
\ben
I_m^3 &=& \int_{-1}^1 \int_{-1}^1(\xi_1-\xi_2)^{2m}\,d\xi_1d\xi_2 =\frac{2^{2m+2}}{(2m+1)(m+1)},
\enn
\ben
I_m^4 &=& \int_{-1}^1 \int_{-1}^1(\xi_1-\xi_2)^{2m}\ln|\xi_1-\xi_2|\,d\xi_1d\xi_2 \\ &=&\frac{2^{2m+2}\ln2}{(2m+1)(m+1)}-\frac{(4m+3) 2^{2m+1}}{(2m+1)^2(m+1)^2},
\enn
\ben
I_m^5 &=& \int_{-1}^1 \int_{-1}^1(\xi_1-\xi_2)^{2m}\xi_1\xi_2\,d\xi_1d\xi_2 =\sum_{l=0}^{2m}\frac{(-1)^lC_{2m}^l\left[1-(-1)^l\right]^2}{(l+2)(2m+2-l)},
\enn
\ben
I_m^6 &=& \int_{-1}^1 \int_{-1}^1(\xi_1-\xi_2)^{2m}\xi_1\xi_2\ln|\xi_1-\xi_2|\,d\xi_1d\xi_2,\\
&=& -\frac{m 2^{2m+2}\ln2}{(2m+1)(m+1)(m+2)} \\
&-& \frac{1}{(2m+1)(m+1)} \left[\frac{2^{2m+2}}{(m+2)^2}+\frac{2^{2m+1}}{m+1}-\frac{2^{2m+3}}{2m+3}\right]\\
&+& \frac{1}{(m+1)^2(2m+1)^2}\sum_{l=1, l\;is\;odd}^{2m+1}C_{2m+1}^l \frac{(2m+1)^2}{l+2}\\
&-& \frac{1}{(m+1)^2(2m+1)^2}\sum_{l=0, l\;is\;even}^{2m}C_{2m+1}^l \frac{(4m+3)}{l+3}.
\enn
Among them, $\textcolor{rot}{I_m^j}, j=2,4,6$ are weakly-singular integrals and can be evaluated exactly.

We are now ready to present the  formulations to \textcolor{rot}{compute} the \textcolor{rot}{matrixes} in (\ref{entry1})-(\ref{entry3}), and here we only consider the matrix ${\bf V}_{h}$. All computational formulations associated to other matrix in (\ref{entry1})-(\ref{entry3}) are listed in Appendix. Firstly, we introduce some notations. Let us denote $\G^1=\G_{i-1}$ or $\G_i$ and $\G^2=\G_{j-1}$ or $\G_j$,  and denote the two vertexes of $\G^1$ and $\G^2$ by  $x^1,x^2$ and $y^1, y^2$, respectively. For $x\in\G^1$, $y\in\G^2$, we set
\ben
x&=&x(\xi_1)=x^1+\frac{1+\xi_1}{2}(x^2-x^1),\quad \xi_1\in[-1,1],\\
y&=&y(\xi_2)=y^1+\frac{1+\xi_2}{2}(y^2-y^1),\quad \xi_2\in[-1,1],
\enn
and
\ben
\varphi_i(x)=\varphi_i(x(\xi_1))=\frac{1+k_1\xi_1}{2}\quad\mbox{on}\quad\G^1,\quad \varphi_j(y)=\varphi_j(y(\xi_2))=\frac{1+k_2\xi_2}{2}\quad\mbox{on}\quad\G^2,
\enn
where
\ben
(k_1,k_2)=
\begin{cases}
(1,1), & \G^1\times\G^2=\G_{i-1}\times\G_{j-1},
\cr  (-1,1), & \G^1\times\G^2=\G_i\times\G_{j-1},
\cr  (1,-1), & \G^1\times\G^2=\G_{i-1}\times\G_j,
\cr  (-1,-1), & \G^1\times\G^2=\G_i\times\G_j.
\end{cases}
\enn
We write
\ben
{\bf V}_{h}(i,j) &=& \int_{\tilde{\G}} (V\psi_j)\varphi_i\,ds\\
&=& \int_{\G_{i-1}\cup\G_i}\int_{\G_j} \textcolor{rot}{\bb E}(x,y)\varphi_i(x)\,ds_yds_x.
\enn
Then it suffices to evaluate the integral
\ben
V_0(\G^1,\G^2,k_1,i,j)=\int_{\G^1}\int_{\G^2} \textcolor{rot}{\bb E}(x,y)\varphi_i(x)\,ds_yds_x,
\enn
which further implies that
\ben
{\bf V}_{h}(i,j) = V_0(\G_{i-1},\G_j,1,i,j) +V_0(\G_i,\G_j,-1,i,j).
\enn
We obtain from (\ref{RepE}) that
\ben
\label{VSij1}
&\quad& V_0(\G^1,\G^2,k_1,i,j) \nonumber\\
&=& \frac{i}{4\mu}\int_{\G^1}\int_{\G^2} H_0^{(1)}(k_s|x-y|)\varphi_i(x)\textcolor{rot}{\bb I}\,ds_yds_x \nonumber\\
&-& \frac{i}{4\rho\omega^2} \int_{\G^1}\int_{\G^2} \frac{k_sH_1^{(1)}(k_s|x-y|)-k_pH_1^{(1)}(k_p|x-y|)}{|x-y|}\varphi_i(x)\textcolor{rot}{\bb I}\,ds_yds_x \nonumber\\
&+& \frac{i}{4\rho\omega^2} \int_{\G^1}\int_{\G^2} \frac{k_s^2H_2^{(1)}(k_s|x-y|)-k_p^2H_2^{(1)}(k_p|x-y|)}{|x-y|^2} (x-y)(x-y)^\top \varphi_i(x)ds_yds_x,
\enn
which further yields,  with ${\bf r}=x(\xi_1)-y(\xi_2)$,
\be
\label{VSij2}
&\quad& V_0(\G^1,\G^2,k_1,i,j) \nonumber\\
&=& \frac{i|\G^1||\G^2|}{32\mu}\int_{-1}^1\int_{-1}^1 H_0^{(1)}(k_s|{\bf r}|)(1+k_1\xi_1)\textcolor{rot}{\bb I}\,d\xi_1d\xi_2 \nonumber\\
&-& \frac{i|\G^1||\G^2|}{32\rho\omega^2} \int_{-1}^1\int_{-1}^1 \frac{k_sH_1^{(1)}(k_s|{\bf r}|)-k_pH_1^{(1)}(k_p|{\bf r}|)}{|{\bf r}|} (1+k_1\xi_1)\textcolor{rot}{\bb I}\,d\xi_1d\xi_2 \nonumber\\
&+& \frac{i|\G^1||\G^2|}{32\rho\omega^2} \int_{-1}^1\int_{-1}^1 \frac{k_s^2H_2^{(1)}(k_s|{\bf r}|)-k_p^2H_2^{(1)}(k_p|{\bf r}|)}{|{\bf r}|^2} (1+k_1\xi_1){\bf r}{\bf r}^\top d\xi_1d\xi_2.
\en
If $\G^1\ne\G^2$, the formula (\ref{VSij2}) can be \textcolor{rot}{approximated} directly by Gauss quadrature rules. If $\G^1=\G^2$, by Lemma \ref{series} we have
\be
\label{VSij3}
&\quad& V_0(\G^1,\G^2,k_1,i,j) \nonumber\\
&=& \sum_{m=0}^\infty \frac{ik_s^{2m}|\G^1|^{2m+2}}{2^{2m+5}\mu} \left[\left(C_m^5+C_m^6\ln\frac{k_s|\G^1|}{4}\right)I_m^3 +C_m^6I_m^4\right]\textcolor{rot}{\bb I} \nonumber\\
&-& \sum_{m=0}^\infty \frac{i|\G^1|^{2m+2}}{2^{2m+5}\rho\omega^2} \left[\left(C_m^1+C_m^2\ln\frac{|\G^1|}{2}\right)I_m^3 +C_m^2I_m^4\right] \textcolor{rot}{\bb I}\nonumber\\
&+& \sum_{m=0}^\infty \frac{i|\G^1|^{2m+4}}{2^{2m+7}\rho\omega^2} \left[\left(C_m^3+C_m^4\ln\frac{|\G^1|}{2}\right)I_{m+1}^3 +C_m^4I_{m+1}^4\right]{\bf t}_{\G_1}\,{\bf t}_{\G_1}^\top \nonumber\\
&+& \frac{|\G^1|^2(k_s^2-k_p^2)}{8\pi\rho\omega^2}{\bf t}_{\G_1}\,{\bf t}_{\G_1}^\top.
\en
Finally, we point out that  the infinite series should be truncated into finite ones in practical computing. Let $M$ be the truncation number of the series, that is, we only use the $M+1$ leading terms. Usually, $M=20$ are large enough for the achievement of optimal order of accuracy for the numerical tests to be presented in the next Section.

\section{Numerical examples}

In this section, we present several numerical tests to demonstrate the efficiency and accuracy of the proposed boundary element method for solving two dimensional elastic wave scattering problems. Unless otherwise stated, we always set $\eta=1$, $\rho=1$, $\lambda=2$, $\mu=1$ which implies that $k_s=2k_p$. We use a direct solver  for solutions of the  linear system (\ref{linearsys}). Impenetrable obstacles occupying the domain $\Omega$ with different boundary shapes are considered in our tests, which are listed in Figure \ref{Fig1}.
\begin{figure}[htbp]
\centering
\begin{tabular}{ccc}
\includegraphics[scale=0.25]{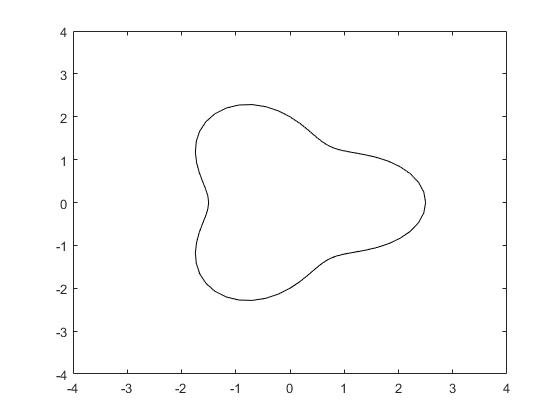} &
\includegraphics[scale=0.25]{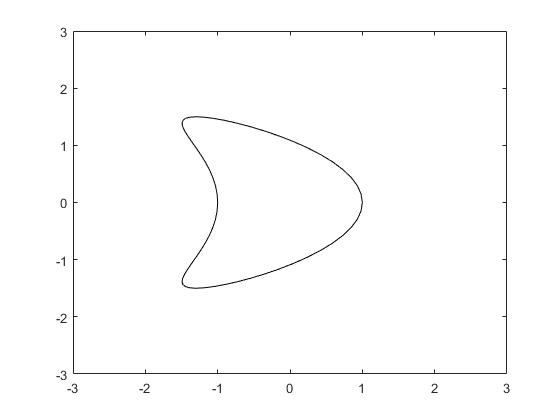} &
\includegraphics[scale=0.25]{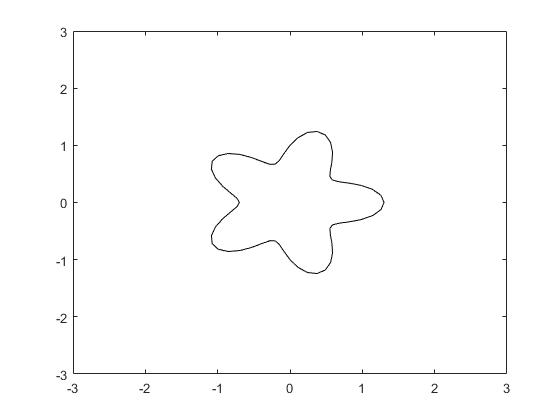} \\
(a) rounded-triangle-sharped & (b) kite-shaped & (c) star-shaped
\end{tabular}
\begin{tabular}{cc}
\includegraphics[scale=0.25]{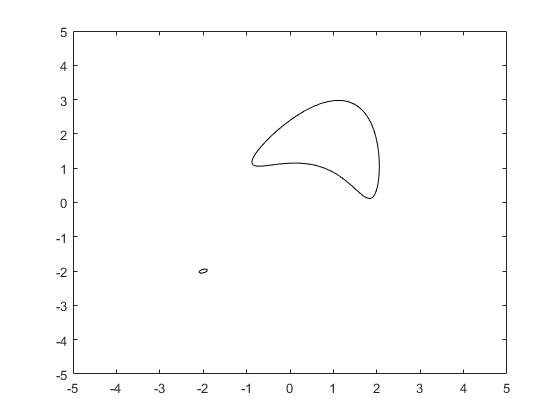} &
\includegraphics[scale=0.25]{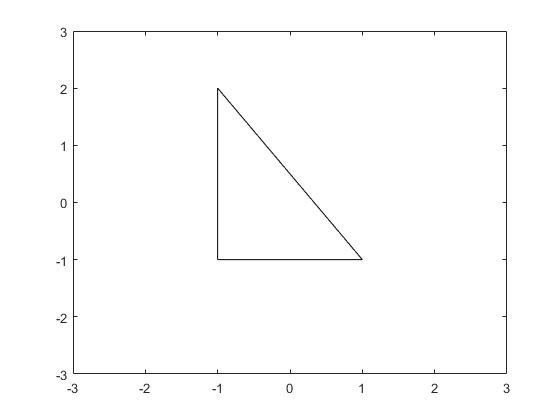} \\
(d) mixed-shaped& \textcolor{rot}{(e) right-angled-triangle-shaped}
\end{tabular}
\caption{Impenetrable obstacles to be considered in numerical tests.}
\label{Fig1}
\end{figure}

{\bf Example 1} ({\it accuracy of the series expansion of Hankel functions}). In the first example, we test the accuracy of the series expansions derived in Lemma 3.1. Denote
\ben
F_1&:=&k_sH_1^{(1)}(k_s|x-y|)-k_pH_1^{(1)}(k_p|x-y|),\\
F_2&:=&k_s^2H_2^{(1)}(k_s|x-y|)-k_p^2H_2^{(1)}(k_p|x-y|),\\
F_3&:=&H_0^{(1)}(k_s|x-y|).
\enn
Since we only use these expansions for computing weakly singular integrals, and it means that $|x-y|$ is relatively small. In this example, we set $|x-y|$ to be half of the shear wave length, that is, $|x-y|=\pi/k_s$. First, we test the numerical errors of $F_i, i=1,2,3$ with respect to the truncation number $M$ of the series. We choose $\omega=1$ and the absolute errors are presented in Figure \ref{Fig2}(a). It can be seen that we obtain highly accurate values of $F_i, i=1,2,3$ as $M\ge 15$. Next, we investigate  the numerical errors of $F_i, i=1,2,3$ as the frequency increasing. Now, we fix $M=20$ and choose different frequencies from $\pi$ to $21\pi$. Observe from Figure \ref{Fig2}(b) that the numerical errors of $F_1$, $F_2$ and $F_3$ are of order $O(\omega^2)$, $O(\omega)$ and $O(1)$, respectively. This result shows that the numerical errors of $F_i, i=1,2,3$ are controllable with respect to the change of frenquency.

\begin{figure}[htbp]
\centering
\begin{tabular}{cc}
\includegraphics[scale=0.35]{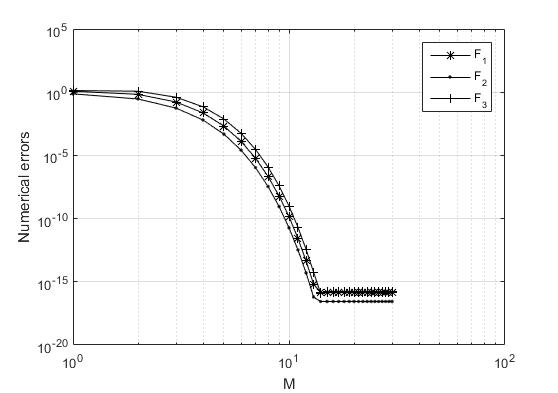} &
\includegraphics[scale=0.35]{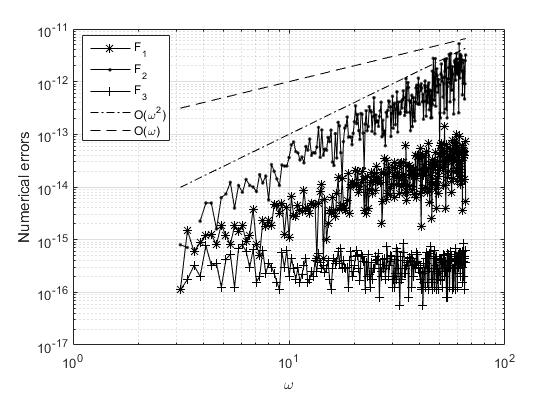} \\
(a) & (b)
\end{tabular}
\caption{Numerical errors of the series expansion for $F_i, i=1,2,3$ with respect to : (a) the truncation number $M$ ; (b) the frequency $\omega$.}
\label{Fig2}
\end{figure}

{\bf Example 2} ({\it accuracy of BEM with low frequency}). In this example, we consider the elastic wave solutions with low frequencies. Assume that the obstacle $\Om$ is rounded-triangle-shaped or kite-shaped,  and their boundaries are characterized by
\ben
x(t)=(2+0.5\cos3t)(\cos t,\sin t),\quad t\in[0,2\pi),
\enn
and
\ben
x(t)=(\cos t+0.65\cos2t-0.65,1.5\sin t),\quad t\in[0,2\pi),
\enn
respectively. We choose the incident wave such that the exact solution is
\ben
\textcolor{rot}{{\bb u}}=-\nabla\varphi,\quad \varphi(x)=H_0^{(1)}(k_p|x|),\quad x\in\Om^c,
\enn
and set $M=20$. The exact and numerical solutions on $\G$ are plotted  in Figure \ref{Fig3} and \ref{Fig4} when $N=64$ and $\omega=1$. We observe that the numerical solutions are in a perfect agreement with the exact ones from the qualitative point of view. In Table \ref{Table1} and \ref{Table2}, we present the numerical errors
\ben
\|\textcolor{rot}{{\bb u}-{\bb u}_h}\|_{(H^{0}(\G))^2}
\enn
with respect to $N$ as $\omega= 3$ and $5$, which confirm the optimal order of accuracy.
%\ben
%\|{\bf u}^{sc}-{\bf u}^{sc}_h\|_{(H^{0}(\G))^2}=O(h).
%\enn

\begin{figure}[htbp]
\centering
\begin{tabular}{cc}
\includegraphics[scale=0.35]{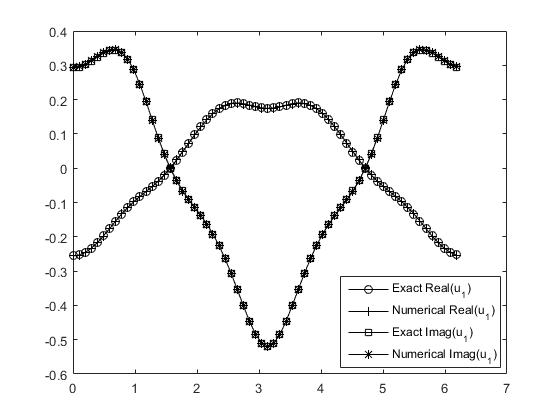} &
\includegraphics[scale=0.35]{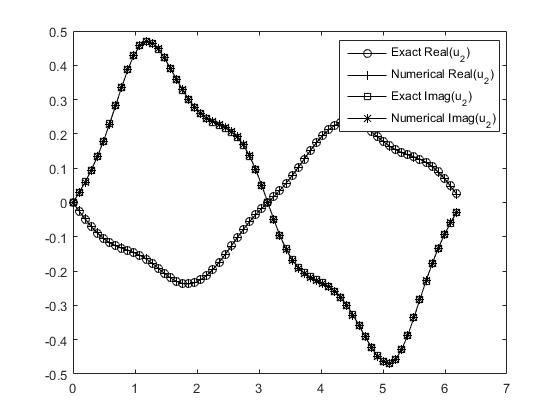}
\end{tabular}
\caption{\textcolor{rot}{The real and imaginary parts of the exact and numerical solutions when $\Om$ is rounded-triangle-shaped for Example 2.}}
\label{Fig3}
\end{figure}

\begin{figure}[htbp]
\centering
\begin{tabular}{cc}
\includegraphics[scale=0.35]{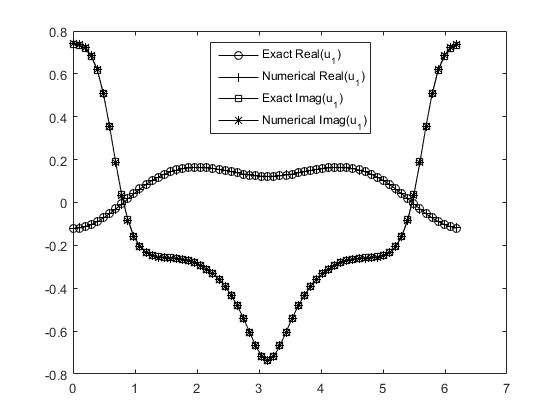} &
\includegraphics[scale=0.35]{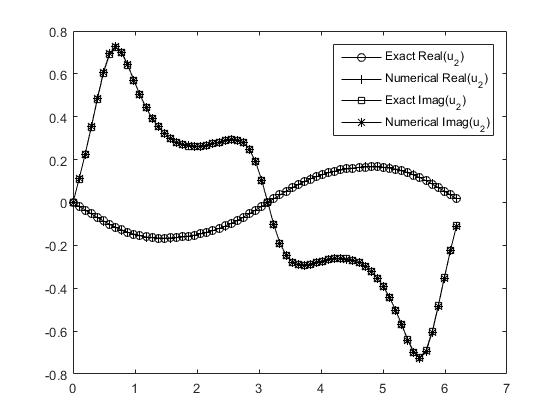}
\end{tabular}
\caption{\textcolor{rot}{The real and imaginary parts of the exact and numerical solutions when $\Om$ is kite-shaped for Example 2.}}
\label{Fig4}
\end{figure}

\begin{table}[htbp]
\caption{Numerical errors vs the number of total elements $N$ when $\Om$ is rounded-triangle-shaped for Example 2.}
\centering
\begin{tabular}{ccccccc}
\hline
$N$ & $\omega$ & $\|\textcolor{rot}{{\bb u}-{\bb u}_h}\|_{(H^{0}(\G))^2}$ & Order & $\omega$ & $\|\textcolor{rot}{{\bb u}-{\bb u}_h}\|_{(H^{0}(\G))^2}$ & Order \\
\hline
 256  &   & 2.05E-3 & --     &   & 6.64E-3 & --    \\
 512  & 3 & 9.58E-4 & 1.10   & 5 & 3.38E-3 & 0.97  \\
1024  &   & 4.84E-4 & 0.99   &   & 1.73E-3 & 0.97  \\
\hline
\end{tabular}
\label{Table1}
\end{table}

\begin{table}[htbp]
\caption{Numerical errors vs the number of total elements $N$ when $\Om$ is kite-shaped for Example 2.}
\centering
\begin{tabular}{ccccccc}
\hline
$N$ & $\omega$ & $\|\textcolor{rot}{{\bb u}-{\bb u}_h}\|_{(H^{0}(\G))^2}$ & Order & $\omega$ & $\|\textcolor{rot}{{\bb u}-{\bb u}_h}\|_{(H^{0}(\G))^2}$ & Order \\
\hline
 256  &   & 1.92E-3 & --     &   & 3.41E-3 & --   \\
 512  & 3 & 8.80E-4 & 1.13   & 5 & 1.58E-3 & 1.11  \\
1024  &   & 4.32E-4 & 1.03   &   & 7.89E-4 & 1.00  \\
\hline
\end{tabular}
\label{Table2}
\end{table}

\begin{figure}[htbp]
\centering
\begin{tabular}{c}
\includegraphics[scale=0.4]{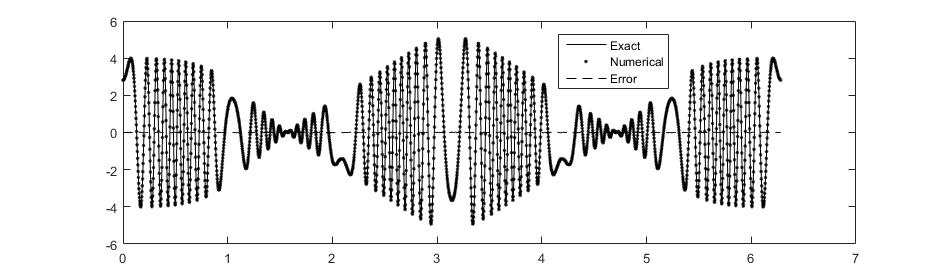} \\
(a) $\mbox{Re}\,u_1$ \\
\includegraphics[scale=0.4]{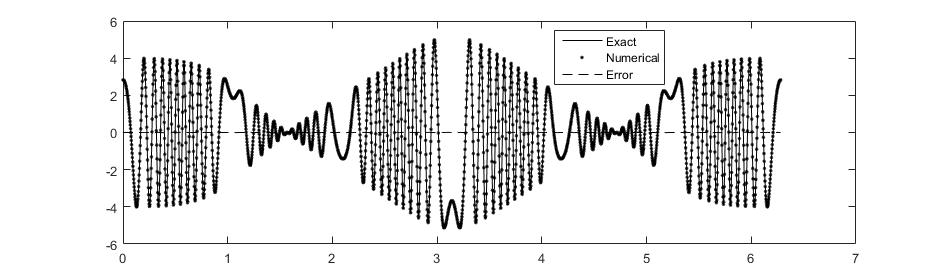} \\
(b) $\mbox{Re}\,u_2$
\end{tabular}
\caption{\textcolor{rot}{The real parts of the exact solution ${\bb u}$, numerical solution ${\bb u}_h$  for Example 3 with $\omega=40\pi$.}}
\label{Fig5}
\end{figure}

\begin{figure}[htbp]
\centering
\begin{tabular}{c}
\includegraphics[scale=0.4]{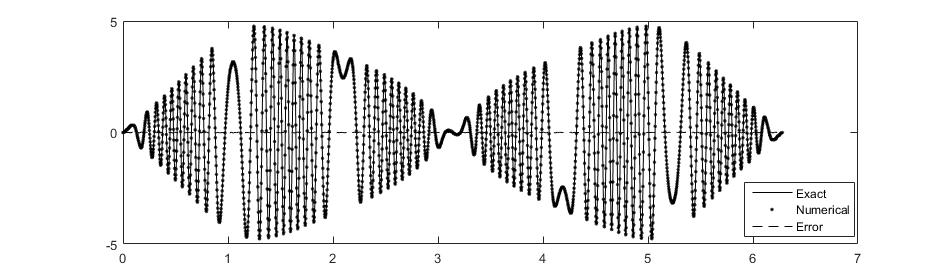} \\
(a) $\mbox{Im}\,u_1$ \\
\includegraphics[scale=0.4]{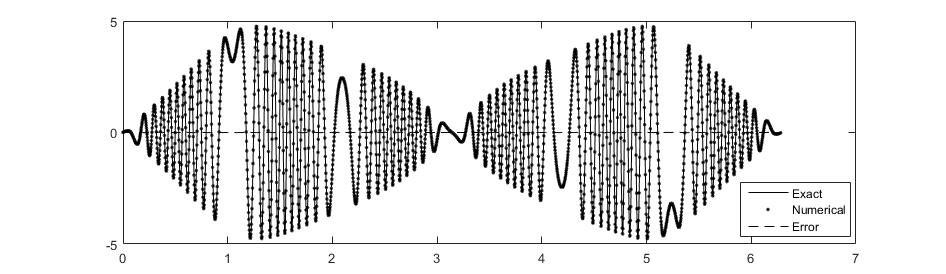} \\
(b) $\mbox{Im}\,u_2$
\end{tabular}
\caption{\textcolor{rot}{The imaginary parts of the exact solution ${\bb u}$, numerical solution ${\bb u}_h$  for Example 3 with $\omega=40\pi$.}}
\label{Fig6}
\end{figure}

{\bf Example 3} ({\it accuracy of BEM with high frequency}). In this example, we consider the rounded-triangle-shaped obstacle and compute the solutions with high frequency. Here we choose $M=20$ and consider $\omega=10\pi$, $20\pi$ and $40\pi$  which means that the corresponding shear wavelengths are $\lambda_s=2\pi/k_s=0.2$, $0.1$ and $0.05$, respectively. The number of nodes $N$ is chosen such that
\ben
N>16\pi R_\G/\lambda_s.
\enn
The exact and numerical solutions as $\omega=40\pi$ are shown in Figure \ref{Fig5} and \ref{Fig6}, from which one can observe that the numerical solutions are also in a perfect agreement with the exact ones from the qualitative point of view. The numerical errors measured in $L^2$-norm and $L^\infty$-norm for all three different frequencies are listed in Table \ref{Table3}.

\begin{table}[htbp]
\caption{Numerical errors for Example 3.}
\centering
\begin{tabular}{cccc}
\hline
$\omega$ & $N$ & $\|\textcolor{rot}{{\bb u}-{\bb u}_h}\|_{(H^{0}(\G))^2}$ & $\|\textcolor{rot}{{\bb u}-{\bb u}_h}\|_{(L^\infty(\G))^2}$ \\
\hline
 10$\pi$ & 630   & 3.76E-2 & 1.70E-2   \\
 20$\pi$ & 1260  & 4.69E-2 & 2.64E-2   \\
 40$\pi$ & 2520  & 6.24E-2 & 3.52E-2   \\
\hline
\end{tabular}
\label{Table3}
\end{table}

{\bf Example 4} ({\it scattering by obstacle with complex geometry}). We consider the scattering of an incident compressional plane wave
\ben
\textcolor{rot}{\bb u}^i=d\,e^{ik_px\cdot d},\quad d=(1,0)^\top.
\enn
The obstacle $\Om$ is star-shaped with the boundary $\G$ characterized by
\ben
x(t)=(1+0.3\cos5t)(\cos t,\sin t),\quad t\in[0,2\pi).
\enn
We compute the total field $\textcolor{rot}{{\bb u}^t={\bb u}+{\bb u}^i}=(u_1^t,u_2^t)^\top$ in $\Om^c$ using (\ref{DirectBRF}) with $N=1024$, and present the numerical results in Figure \ref{Fig7}. Our numerical results show that the multiple scattering effects incurred by the concave portion of the obstacle are accurately captured.

\begin{figure}[htbp]
\centering
\begin{tabular}{cc}
\includegraphics[scale=0.3]{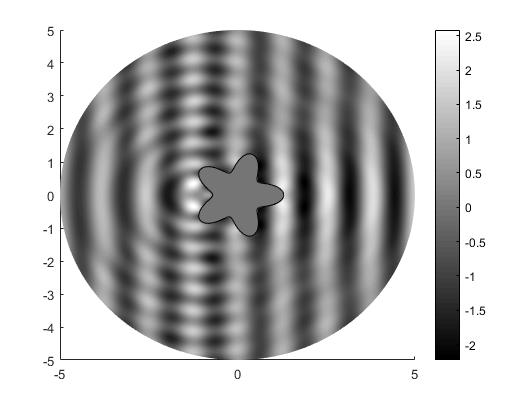} &
\includegraphics[scale=0.3]{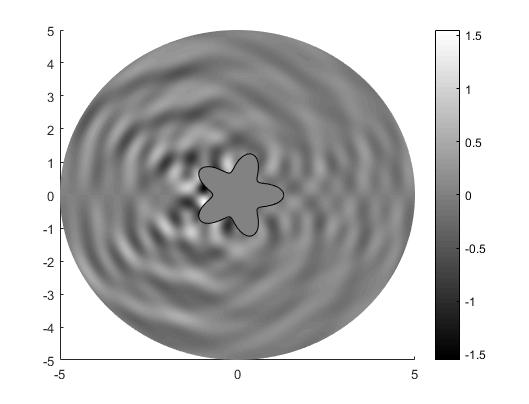} \\
(a) $\mbox{Re}\,u_1^t$ & (b) $\mbox{Re}\,u_2^t$ \\
\includegraphics[scale=0.3]{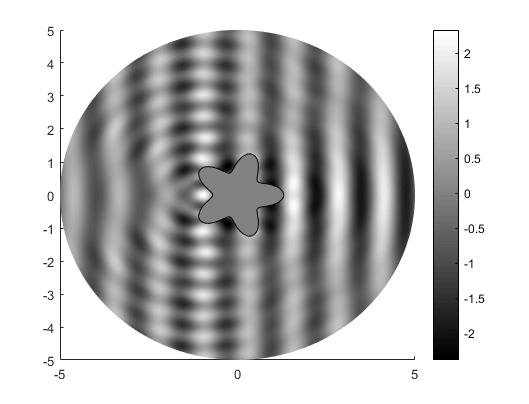} &
\includegraphics[scale=0.3]{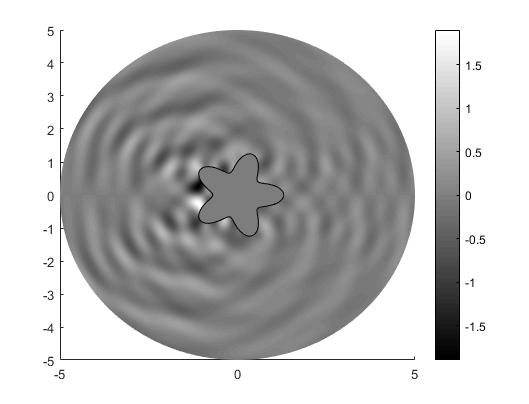} \\
(c) $\mbox{Im}\,u_1^t$ & (d) $\mbox{Im}\,u_2^t$
\end{tabular}
\caption{The real and imaginary parts of the numerical total field when $\Om$ is star-shaped.}
\label{Fig7}
\end{figure}

{\bf Example 5} ({\it scattering by multi-scale obstacles}). \textcolor{rot}{In this example}, we consider the scattering of an incident point source located at the origin by a coupling of an extended kite-shaped obstacle and a relatively small ellipse-shaped obstacle (see Figure 1(d)). The incident wave is selected as
\ben
\textcolor{rot}{\bb u}^{i}=-\nabla\varphi,\quad \varphi(x)=H_0^{(1)}(k_p|x|),\quad x\ne 0.
\enn
We make a comparison between the scattering phenomenon by multi-scale obstacles and that by only extended obstacle. We choose $N=1024$, and present the numerical solutions in Figure \ref{Fig8} and \ref{Fig9}, respectively. It can be seen that, even though the small obstacle has a relatively small effect on the scattered field, this influence has been  indeed captured by our methods, and can be observed from  the mid-left part of $\mbox{Re}\,u_1$ in Figure \ref{Fig8} and \ref{Fig9} for instance.

\begin{figure}[htbp]
\centering
\begin{tabular}{cc}
\includegraphics[scale=0.32]{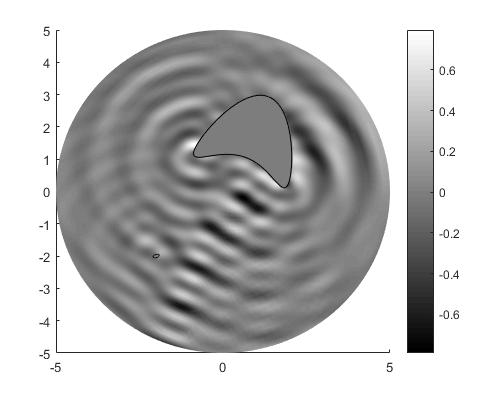} &
\includegraphics[scale=0.32]{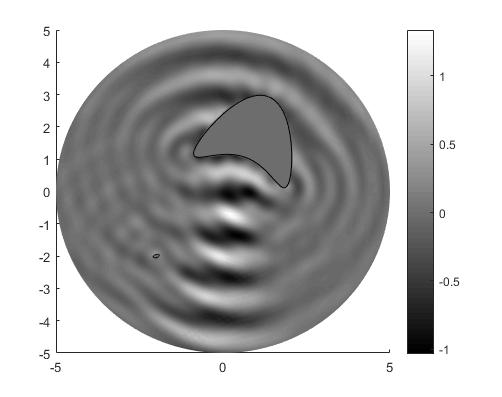} \\
(a) $\mbox{Re}\,u_1$ & (b) $\mbox{Re}\,u_2$ \\
\includegraphics[scale=0.32]{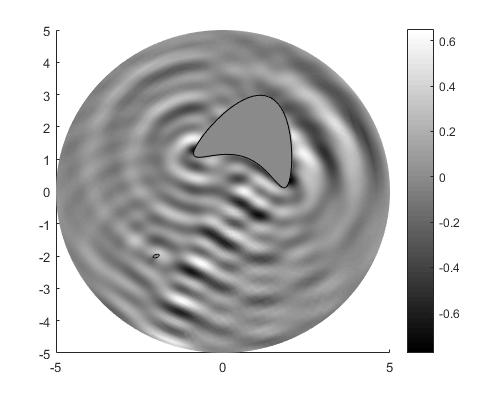} &
\includegraphics[scale=0.32]{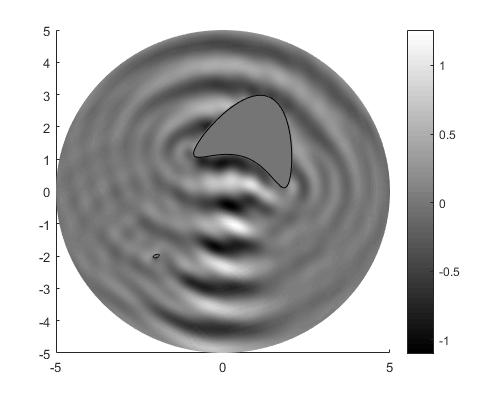} \\
(c) $\mbox{Im}\,u_1$ & (d) $\mbox{Im}\,u_2$
\end{tabular}
\caption{The real and imaginary parts of the numerical scattered field when $\Om$ is mixed-shaped.}
\label{Fig8}
\end{figure}

\begin{figure}[htbp]
\centering
\begin{tabular}{cc}
\includegraphics[scale=0.32]{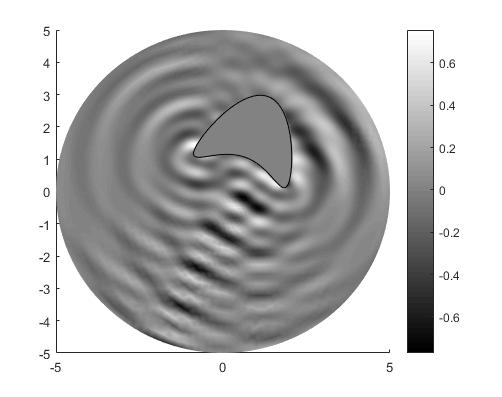} &
\includegraphics[scale=0.32]{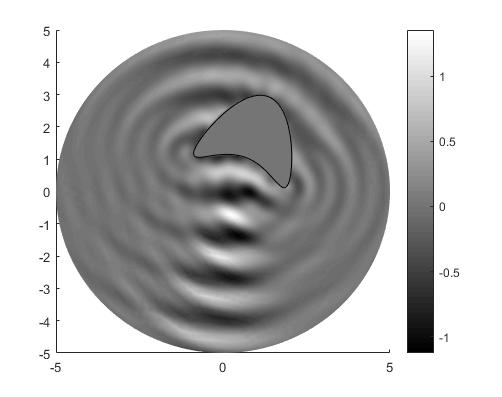} \\
(a) $\mbox{Re}\,u_1$ & (b) $\mbox{Re}\,u_2$ \\
\includegraphics[scale=0.32]{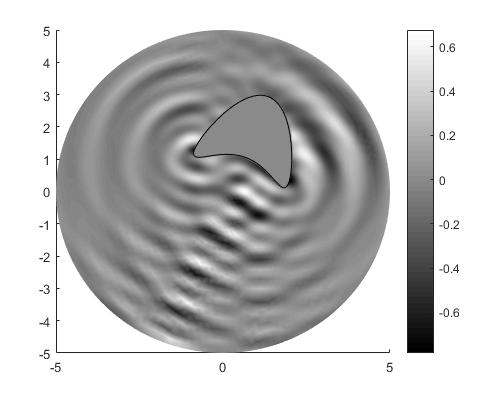} &
\includegraphics[scale=0.32]{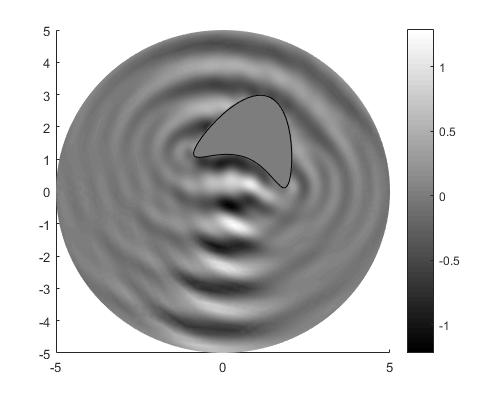} \\
(c) $\mbox{Im}\,u_1$ & (d) $\mbox{Im}\,u_2$
\end{tabular}
\caption{The real and imaginary parts of the numerical scattered field when $\Om$ is kite-shaped.}
\label{Fig9}
\end{figure}

\textcolor{rot}{{\bf Example 6} ({\it scattering by obstacle with non-smooth boundary}). This example is designed to verify the ability of Galerkin BEM to handle rough surface. Assume that the obstacle $\Omega$ is right-angled-triangle-shaped, see Figure \ref{Fig1}(e). We set the exact solution to be the same as in Example 2 and choose $M=20$. The exact and numerical solutions on $\G$ are plotted in Figure \ref{Fig10} when $N=64$ and $\omega=1$ and in Figure \ref{Fig11} when $N=640$ and $\omega=10\pi$. We also observe that the numerical solutions are in a perfect agreement with the exact ones from the qualitative point of view. In Table \ref{Table4}, we present the numerical errors $\|{\bb u}-{\bb u}_h\|_{(H^{0}(\G))^2}$ with respect to $N$ for low frequencies, which give lower order of accuracy compared with the scattering with smooth boundary.}

\begin{figure}[htbp]
\centering
\begin{tabular}{cc}
\includegraphics[scale=0.35]{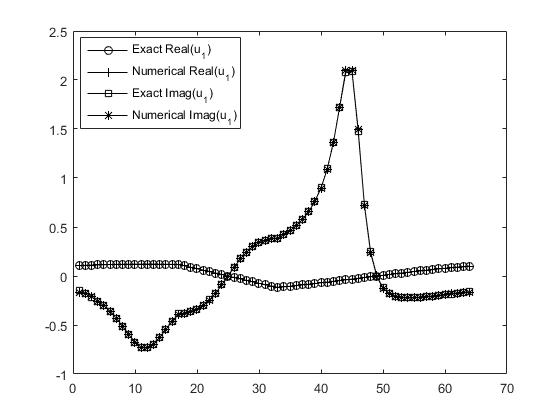} &
\includegraphics[scale=0.35]{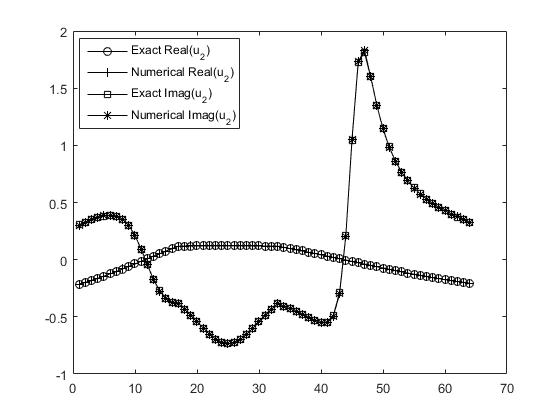}
\end{tabular}
\caption{\textcolor{rot}{The real and imaginary parts of the exact and numerical solutions when $\Om$ is right-angled-triangle-shaped for Example 6 with $N=64$ and $\omega=1$.}}
\label{Fig10}
\end{figure}

\begin{figure}[htbp]
\centering
\begin{tabular}{cc}
\includegraphics[scale=0.35]{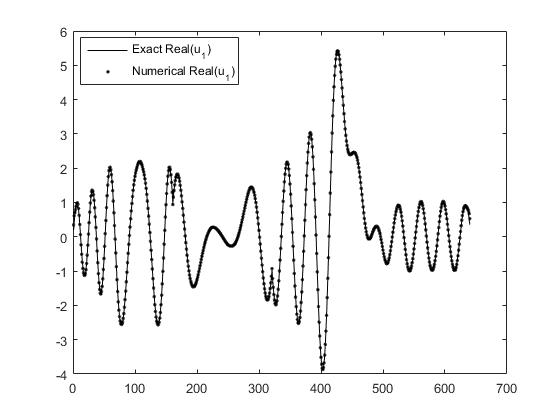} &
\includegraphics[scale=0.35]{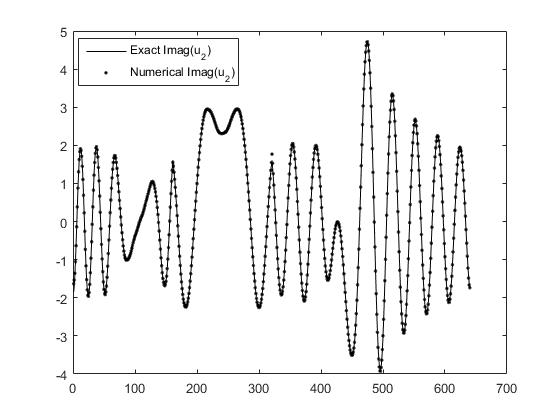}
\end{tabular}
\caption{\textcolor{rot}{The real and imaginary parts of the exact and numerical solutions when $\Om$ is right-angled-triangle-shaped for Example 6 with $N=640$ and $\omega=10\pi$.}}
\label{Fig11}
\end{figure}

\begin{table}[htbp]
\textcolor{rot}{\caption{Numerical errors vs the number of total elements $N$ when $\Om$ is right-angled-triangle-shaped for Example 6.}
\begin{center}
\begin{tabular}{ccccccc}
\hline
$N$ & $\omega$ & $\|\textcolor{rot}{{\bb u}-{\bb u}_h}\|_{(H^{0}(\G))^2}$ & Order & $\omega$ & $\|\textcolor{rot}{{\bb u}-{\bb u}_h}\|_{(H^{0}(\G))^2}$ & Order \\
\hline
 128  &   & 1.66E-2 & --     &   & 3.23E-2 & --    \\
 256  & 3 & 9.24E-3 & 0.85   & 5 & 1.88E-2 & 0.78  \\
 512  &   & 5.39E-3 & 0.78   &   & 1.10E-2 & 0.77  \\
\hline
\end{tabular}
\end{center}}
\label{Table4}
\end{table}

\textcolor{rot}{{\bf Example 7} ({\it scattering with high contrast parameters}). In this last example, we consider the exterior elastic scattering problem with high contrast parameters. Assume that the obstacle $\Omega$ is kite-shaped and the exact solution is set to be the same as in Example 2. We always choose $N=64$ and $\omega=3$. Firstly, we consider the case of small shear modulus, i.e., small $\mu$. We plot the exact and numerical solutions on $\G$ in Figure \ref{Fig12} choosing $\mu=0.1$. Secondly, we consider the incompressible limit case, i.e., $\lambda\rightarrow\infty$. We choose $\lambda=100$ and present the numerical solutions in Figure \ref{Fig13}. The numerical results shows that our methods can also handle the cases of high contrast parameters.}

\begin{figure}[htbp]
\centering
\begin{tabular}{cc}
\includegraphics[scale=0.35]{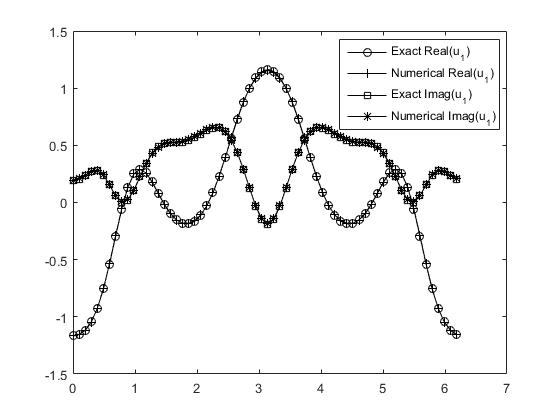} &
\includegraphics[scale=0.35]{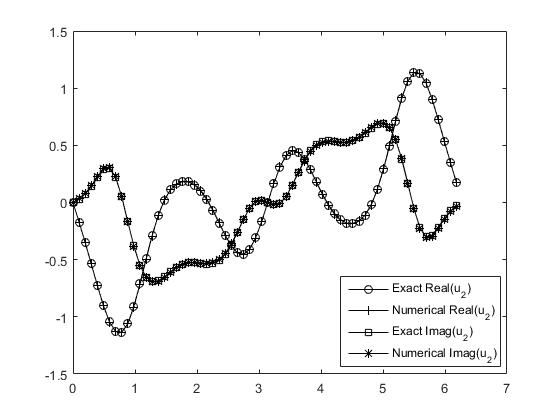}
\end{tabular}
\caption{\textcolor{rot}{The real and imaginary parts of the exact and numerical solutions when $\Om$ is kite-shaped for Example 7 with $\mu=0.1$.}}
\label{Fig12}
\end{figure}

\begin{figure}[htbp]
\centering
\begin{tabular}{cc}
\includegraphics[scale=0.35]{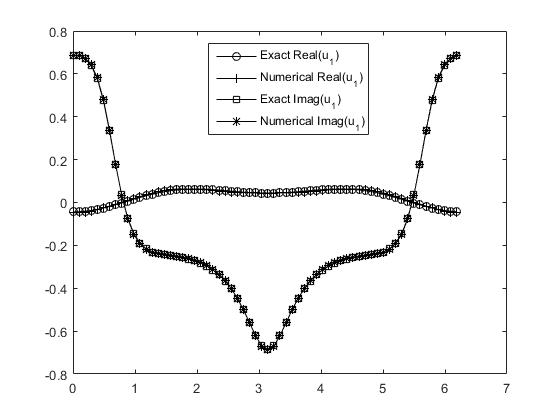} &
\includegraphics[scale=0.35]{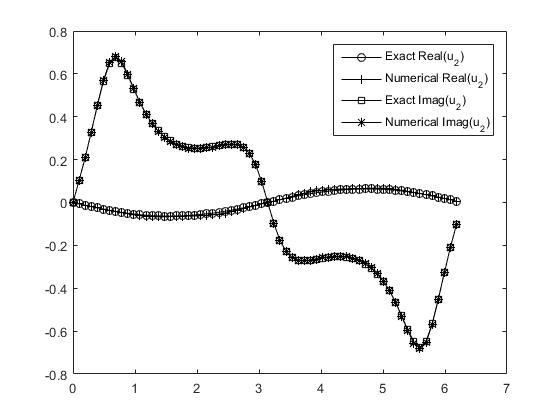}
\end{tabular}
\caption{\textcolor{rot}{The real and imaginary parts of the exact and numerical solutions when $\Om$ is kite-shaped for Example 6 with $\lambda=100$.}}
\label{Fig13}
\end{figure}

\section{Conclusion}
\label{sec:7}
In this work, we developed Galerkin boundary element methods to solve the two dimensional elastic wave scattering problem.  In particular, a novel computational approach is proposed for the evaluation of weakly-singular, singular, and hypersingular boundary integral operators corresponding to time-harmonic Navier equations. Several numerical experiments have been presented to demonstrate efficiency and accuracy of the proposed numerical formulation and methods. Due to the fact that the matrix ${\bf A}_h$ in (\ref{linearsys}) is usually complex and indefinite,  we plan to develop computational techniques  associated with high frequency, fast algorithms, and valid pre-conditioner for the linear system.

\appendix
\section{Computational formulations}

We present the computational formulations of the entries (\ref{entry1})-(\ref{entry3}). Following the notation in Section 4.3, define
\ben
K_0(\G^1,\G^2,k_1,k_2,i,j)=\int_{\G^1}\int_{\G^2} (\textcolor{rot}{{\bb T}_y{\bb E}}(x,y))^\top\varphi_j(y)\varphi_i(x)\,ds_yds_x,
\enn
and
\ben
K'_0(\G^1,\G^2,k_1,i,j)=\int_{\G^1}\int_{\G^2} \textcolor{rot}{{\bb T}_x{\bb E}}(x,y)\varphi_i(x)\,ds_yds_x,
\enn
which lead to
\ben
{\bf K}_{h}(i,j) &=& K_0(\G_{i-1},\G_{j-1},1,1,i,j)+K_0(\G_i,\G_{j-1},-1,1,i,j)\\
&+& K_0(\G_{i-1},\G_j,1,-1,i,j)+K_0(\G_i,\G_j,-1,-1,i,j),\\
{\bf K}'_{h}(i,j) &=& K'_0(\G_{i-1},\G_j,1,-1,i,j)+K'_0(\G_i,\G_j,-1,-1,i,j).
\enn
From the regularized formulation (\ref{Ks}) and (\ref{Ksp}), we know that when $\G^1\ne\G^2$ the Gauss quadrature rule can be used naturally for computing
\be
\label{KSij1}
&\quad& K_0(\G^1,\G^2,k_1,k_2,i,j)\nonumber\\
&=& \frac{ik_s|\G^1||\G^2|}{64} \int_{-1}^1\int_{-1}^1 H_1^{(1)}(k_s|{\bf r}|)(1+k_2\xi_2)(1+k_1\xi_1)\,\frac{{\bf r}^\top{\bf n}_{\G^2}}{|{\bf r}|}\textcolor{rot}{\bb I}\,d\xi_2d\xi_1\nonumber\\
&+& \frac{ik_2|\G^1|}{32} \int_{-1}^1\int_{-1}^1 H_0^{(1)}(k_s|{\bf r}|) (1+k_1\xi_1){\bf N} \,d\xi_2d\xi_1 \nonumber\\
&-& \frac{i|\G^1||\G^2|}{64} \int_{-1}^1\int_{-1}^1  \frac{\left[k_sH_1^{(1)}(k_s|{\bf r}|)-k_pH_1^{(1)}(k_p|{\bf r}|)\right]} {|{\bf r}|} (1+k_2\xi_2)(1+k_1\xi_1){\bf r}\,{\bf n}_{\G^2}^\top\,d\xi_2d\xi_1\nonumber\\
&-& \frac{i\mu k_2|\G^1|}{16\rho\omega^2} \int_{-1}^1\int_{-1}^1  \frac{\left[k_sH_1^{(1)}(k_s|{\bf r}|)-k_pH_1^{(1)}(k_p|{\bf r}|)\right]} {|{\bf r}|}(1+k_1\xi_1){\bf N}\,d\xi_2d\xi_1 \nonumber\\
&+& \frac{i\mu k_2|\G^1|}{16\rho\omega^2} \int_{-1}^1\int_{-1}^1  \frac{\left[k_s^2H_2^{(1)}(k_s|{\bf r}|)-k_p^2H_2^{(1)}(k_p|{\bf r}|)\right]} {|{\bf r}|^2}(1+k_1\xi_1){\bf r}\,{\bf r}^\top{\bf N}\,d\xi_2d\xi_1,
\en
and
\be
\label{KSPij1}
&\quad& K'_0(\G^1,\G^2,k_1,i,j)\nonumber\\
&=& -\frac{ik_s|\G^1||\G^2|}{32} \int_{-1}^1\int_{-1}^1 H_1^{(1)}(k_s|{\bf r}|) (1+k_1\xi_1)\frac{{\bf r}^\top{\bf n}_{\G^1}}{|{\bf r}|}\textcolor{rot}{\bb I}\,d\xi_2d\xi_1\nonumber\\
&-& \frac{ik_1|\G^2|}{16} \int_{-1}^1\int_{-1}^1 H_0^{(1)}(k_s|{\bf r}|) {\bf N}\,d\xi_2d\xi_1  \nonumber\\
&+& \frac{i|\G^1||\G^2|}{32} \int_{-1}^1\int_{-1}^1  \frac{\left[k_sH_1^{(1)}(k_s|{\bf r}|)-k_pH_1^{(1)}(k_p|{\bf r}|)\right]} {|{\bf r}|} (1+k_1\xi_1){\bf n}_{\G^1}{\bf r}^\top\,d\xi_2d\xi_1\nonumber\\
&+& \frac{i\mu k_1|\G^2|}{8\rho\omega^2} \int_{-1}^1\int_{-1}^1  \frac{\left[k_sH_1^{(1)}(k_s|{\bf r}|)-k_pH_1^{(1)}(k_p|{\bf r}|)\right]} {|{\bf r}|}{\bf N}\,d\xi_2d\xi_1 \nonumber\\
&-& \frac{i\mu k_1|\G^2|}{8\rho\omega^2} \int_{-1}^1\int_{-1}^1  \frac{\left[k_s^2H_2^{(1)}(k_s|{\bf r}|)-k_p^2H_2^{(1)}(k_p|{\bf r}|)\right]} {|{\bf r}|^2} {\bf N} {\bf r}\,{\bf r}^\top\,d\xi_2d\xi_1.
\en
Further, we have for $\G^1=\G^2$ that
\be
\label{KSij2}
&\quad& K_0(\G^1,\G^2,k_1,k_2,i,j)\nonumber\\
&=& \sum_{m=0}^\infty \frac{ik_s^{2m}|\G^1|^{2m+1}k_2} {2^{2m+5}} \left[\left(C_m^5+C_m^6\ln\frac{k_s|\G^1|}{4}\right)I_m^3 +C_m^6I_m^4\right] {\bf N} \nonumber\\
&-& \sum_{m=0}^\infty \frac{i|\G^1|^{2m+3}(k_1-k_2)} {2^{2m+7}} \left[\left(C_m^1+C_m^2\ln\frac{|\G^1|}{2}\right)I_m^1 +C_m^2I_m^2\right] {\bf t}_{\G^1}{\bf n}_{\G^1}^\top \nonumber\\
&-& \sum_{m=0}^\infty \frac{i\mu|\G^1|^{2m+1}k_2} {2^{2m+4}\rho\omega^2} \left[\left(C_m^1+C_m^2\ln\frac{|\G^1|}{2}\right)I_m^3 +C_m^2I_m^4\right] {\bf N} \nonumber\\
&+& \sum_{m=0}^\infty \frac{i\mu|\G^1|^{2m+3}k_2} {2^{2m+6}\rho\omega^2} \left[\left(C_m^3+C_m^4\ln\frac{|\G^1|}{2}\right)I_{m+1}^3 +C_m^4I_{m+1}^4\right] {\bf t}_{\G_1}\,{\bf t}_{\G_1}^\top{\bf N} \nonumber\\
&+& \frac{\mu(k_s^2-k_p^2)k_2}{4\pi\rho\omega^2}{\bf t}_{\G_1}\,{\bf t}_{\G_1}^\top{\bf N},
\en
and
\be
\label{KSPij2}
&\quad& K'_0(\G^1,\G^2,k_1,i,j)\nonumber\\
&=& -\sum_{m=0}^\infty \frac{ik_s^{2m}|\G^1|^{2m+1}k_1} {2^{2m+4}} \left[\left(C_m^5+C_m^6\ln\frac{k_s|\G^1|}{4}\right)I_m^3 +C_m^6I_m^4\right] {\bf N} \nonumber\\
&+& \sum_{m=0}^\infty \frac{i|\G^1|^{2m+3}k_1} {2^{2m+6}} \left[\left(C_m^1+C_m^2\ln\frac{|\G^1|}{2}\right)I_m^1 +C_m^2I_m^2\right] {\bf n}_{\G^1}{\bf t}_{\G^1}^\top \nonumber\\
&+& \sum_{m=0}^\infty \frac{i\mu|\G^1|^{2m+1}k_1} {2^{2m+3}\rho\omega^2} \left[\left(C_m^1+C_m^2\ln\frac{|\G^1|}{2}\right)I_m^3 +C_m^2I_m^4\right] {\bf N} \nonumber\\
&-& \sum_{m=0}^\infty \frac{i\mu|\G^1|^{2m+3}k_1} {2^{2m+5}\rho\omega^2} \left[\left(C_m^3+C_m^4\ln\frac{|\G^1|}{2}\right)I_{m+1}^3 +C_m^4I_{m+1}^4\right] {\bf N}{\bf t}_{\G_1}\,{\bf t}_{\G_1}^\top \nonumber\\
&-& \frac{\mu|\G^1|(k_s^2-k_p^2)k_1}{2\pi\rho\omega^2}{\bf N}{\bf t}_{\G_1}\,{\bf t}_{\G_1}^\top.
\en
Now we consider the matrix ${\bf W}_{h}$. It follows that
\ben
{\bf W}_{h}(i,j) &=& \int_{\tilde{\G}} (W\varphi_j)\varphi_i\,ds\\
&=& \textcolor{rot}{-}\int_{\G_{i-1}\cup\G_i}\int_{\G_{j-1}\cup\G_j} \textcolor{rot}{\bb T}_x (\textcolor{rot}{\bb T}_y\textcolor{rot}{\bb E}(x,y))^\top\varphi_j(y)\varphi_i(x)\,ds_yds_x.
\enn
Then it suffices to evaluate the integral
\ben
W_0(\G^1,\G^2,k_1,k_2,i,j)=\textcolor{rot}{-}\int_{\G^1}\int_{\G^2} \textcolor{rot}{\bb T}_x (\textcolor{rot}{\bb T}_y\textcolor{rot}{\bb E}(x,y))^\top\varphi_j(y)\varphi_i(x)\,ds_yds_x,
\enn
which further implies that
\ben
{\bf W}_{h}(i,j) &=& W_0(\G_{i-1},\G_{j-1},1,1,i,j)+W_0(\G_i,\G_{j-1},-1,1,i,j)\\
&+& W_0(\G_{i-1},\G_j,1,-1,i,j)+W_0(\G_i,\G_j,-1,-1,i,j)
\enn
We know from (\ref{Ws}) that
\be
\label{WSij1}
&\quad& W_0(\G^1,\G^2,k_1,k_2,i,j) \nonumber\\
&=& \mu k_s^2\int_{\G^1}\int_{\G^2} \gamma _{k_s}(x,y)\varphi_j(y)\varphi_i(x)\left({\bf n}_{\G^1}{\bf n}_{\G^2}^\top-{\bf n}_{\G^1}^\top{\bf n}_{\G^2}\textcolor{rot}{\bb I}+{\bf N}{\bf n}_{\G^1}^\top{\bf t}_{\G^2}\right)\,ds_yds_x\nonumber\\
&-& \mu k_s^2\int_{\G^1}\int_{\G^2} \gamma _{k_p}(x,y)\varphi_j(y)\varphi_i(x){\bf n}_{\G^1}{\bf n}_{\G^2}^\top\,ds_yds_x\nonumber\\
&+& 4\mu^2\int_{\G^1}\int_{\G^2}\textcolor{rot}{\bb E}(x,y)\frac{\varphi_j(y)}{ds_y} \frac{\varphi_i(x)}{ds_x}\,ds_yds_x\nonumber\\
&-& \frac{4\mu^2}{\lambda+2\mu}\int_{\G^1}\int_{\G^2} \gamma_{k_p}(x,y)\frac{\varphi_j(y)}{ds_y} \frac{\varphi_i(x)}{ds_x}\textcolor{rot}{\bb I}\,ds_yds_x\nonumber\\
&+& 2\mu\int_{\G^1}\int_{\G^2} {\bf n}_{\G^1} \nabla _{x}^\top R(x,y)\frac{\varphi_j(y)}{ds_y}\varphi_i(x){\bf N}\,ds_yds_x\nonumber\\
&-& 2\mu\int_{\G^1}\int_{\G^2} {\bf N}\nabla_{y} R(x,y){\bf n}_{\G^2}^\top\varphi_j(y)\frac{\varphi_i(x)}{ds_x}\,ds_yds_x.
\en
Then
\be
\label{WSij2}
&\quad& W_0(\G^1,\G^2,k_1,k_2,i,j) \nonumber\\
&=& \frac{i\mu k_s^2|\G^1||\G^2|}{64}\int_{-1}^1\int_{-1}^1 H_0^{(1)}(k_s|{\bf r}|)(1+k_2\xi_2)(1+k_1\xi_1)\,d\xi_2d\xi_1\nonumber\\
&\times& \left({\bf n}_{\G^1}{\bf n}_{\G^2}^\top+\begin{bmatrix}
-{\bf n}_{\G^1}^\top{\bf n}_{\G^2}&-{\bf n}_{\G^1}^\top{\bf t}_{\G^2}\\
{\bf n}_{\G^1}^\top{\bf t}_{\G^2}&-{\bf n}_{\G^1}^\top{\bf n}_{\G^2}
\end{bmatrix}\right)\nonumber\\
&-& \frac{i\mu k_s^2|\G^1||\G^2|}{64}\int_{-1}^1\int_{-1}^1 H_0^{(1)}(k_p|{\bf r}|)(1+k_2\xi_2)(1+k_1\xi_1)\,{\bf n}_{\G^1}{\bf n}_{\G^2}^\top\,d\xi_2d\xi_1\nonumber\\
&+& \frac{i\mu k_1k_2}{4}\int_{-1}^1\int_{-1}^1 H_0^{(1)}(k_s|{\bf r}|) \textcolor{rot}{\bb I}\,d\xi_2d\xi_1\nonumber\\
&-& \frac{i\mu^2k_1k_2}{4(\lambda+2\mu)}\int_{-1}^1\int_{-1}^1 H_0^{(1)}(k_p|{\bf r}|)\textcolor{rot}{\bb I}\,d\xi_2d\xi_1\nonumber\\
&-& \frac{i\mu^2k_1k_2}{4\rho\omega^2}\int_{-1}^1\int_{-1}^1 \frac{\left[k_sH_1^{(1)}(k_s|{\bf r}|)-k_pH_1^{(1)}(k_p|{\bf r}|)\right]} {|{\bf r}|}\textcolor{rot}{\bb I}\,d\xi_2d\xi_1\nonumber\\
&-& \frac{i\mu k_2|\G^1|}{16}\int_{-1}^1\int_{-1}^1  \frac{\left[k_sH_1^{(1)}(k_s|{\bf r}|)-k_pH_1^{(1)}(k_p|{\bf r}|)\right]} {|{\bf r}|}
(1+k_1\xi_1){\bf n}_{\G^1}\,{\bf r}^\top{\bf N}\,d\xi_2d\xi_1\nonumber\\
&-& \frac{i\mu k_1|\G^2|}{16}\int_{-1}^1\int_{-1}^1  \frac{\left[k_sH_1^{(1)}(k_s|{\bf r}|)-k_pH_1^{(1)}(k_p|{\bf r}|)\right]} {|{\bf r}|}
(1+k_2\xi_2){\bf N}{\bf r}\,{\bf n}_{\G^2}^\top\,d\xi_2d\xi_1\nonumber\\
&+& \frac{i\mu^2k_1k_2}{4\rho\omega^2}\int_{-1}^1\int_{-1}^1 \frac{\left[k_s^2H_2^{(1)}(k_s|{\bf r}|)-k_p^2H_2^{(1)}(k_p|{\bf r}|)\right]} {|{\bf r}|^2} {\bf r}\,{\bf r}^\top\,d\xi_2d\xi_1
\en
If $\G^1=\G^2$, by Lemma \ref{series} we have
\be
\label{WSij3}
&\quad& W_0(\G^1,\G^2,k_1,k_2,i,j) \nonumber\\
&=& \sum_{m=0}^\infty \begin{pmatrix}
\frac{i\mu k_s^{2m+2}|\G^1|^{2m+2}}{2^{2m+6}}\left({\bf n}_{\G^1}{\bf n}_{\G^1}^\top-\textcolor{rot}{\bb I}\right)\times\\
\left[\left(C_m^5+C_m^6\ln\frac{k_s|\G^1|}{4}\right)(I_m^3+k_1k_2I_m^5) +C_m^6(I_m^4+k_1k_2I_m^6)\right]
\end{pmatrix}\nonumber\\
&-& \sum_{m=0}^\infty \begin{pmatrix}
\frac{i\mu k_s^2k_p^{2m}|\G^1|^{2m+2}}{2^{2m+6}} {\bf n}_{\G^1}{\bf n}_{\G^1}^\top\times\\
\left[\left(C_m^5+C_m^6\ln\frac{k_p|\G^1|}{4}\right)(I_m^3+k_1k_2I_m^5) +C_m^6(I_m^4+k_1k_2I_m^6)\right]
\end{pmatrix}\nonumber\\
&+& \sum_{m=0}^\infty \frac{i\mu k_s^{2m}|\G^1|^{2m}k_1k_2}{2^{2m+2}} \left[\left(C_m^5+C_m^6\ln\frac{k_s|\G^1|}{4}\right)I_m^3 +C_m^6I_m^4\right]\textcolor{rot}{\bb I} \nonumber\\
&-& \sum_{m=0}^\infty \frac{i\mu^2 k_p^{2m}|\G^1|^{2m}k_1k_2}{2^{2m+2}(\lambda+2\mu)} \left[\left(C_m^5+C_m^6\ln\frac{k_p|\G^1|}{4}\right)I_m^3 +C_m^6I_m^4\right]\textcolor{rot}{\bb I} \nonumber\\
&-& \sum_{m=0}^\infty \frac{i\mu^2 |\G^1|^{2m}k_1k_2}{2^{2m+2}\rho\omega^2} \left[\left(C_m^1+C_m^2\ln\frac{|\G^1|}{2}\right)I_m^3 +C_m^2I_m^4\right] \textcolor{rot}{\bb I}\nonumber\\
&-& \sum_{m=0}^\infty \frac{i\mu |\G^1|^{2m+2}k_1k_2}{2^{2m+5}} \left[\left(C_m^1+C_m^2\ln\frac{|\G^1|}{2}\right)I_m^1 +C_m^2I_m^2\right] {\bf n}_{\G^1} {\bf t}_{\G_1}^\top{\bf N}\nonumber\\
&+& \sum_{m=0}^\infty \frac{i\mu |\G^1|^{2m+2}k_1k_2}{2^{2m+5}} \left[\left(C_m^1+C_m^2\ln\frac{|\G^1|}{2}\right)I_m^1 +C_m^2I_m^2\right]
{\bf N}{\bf t}_{\G_1}\,{\bf n}_{\G^1}^\top\nonumber\\
&+& \sum_{m=0}^\infty \frac{i\mu^2 |\G^1|^{2m+2}k_1k_2}{2^{2m+4}\rho\omega^2} \left[\left(C_m^3+C_m^4\ln\frac{|\G^1|}{2}\right)I_{m+1}^3 +C_m^4I_{m+1}^4\right]{\bf t}_{\G_1}\,{\bf t}_{\G_1}^\top \nonumber\\
&+& \frac{\mu^2(k_s^2-k_p^2)k_1k_2}{\pi\rho\omega^2}{\bf t}_{\G_1}\,{\bf t}_{\G_1}^\top.
\en
Finally, the matrixes ${\bf I}_{1h}$ and ${\bf I}_{2h}$ can be evaluated as follows:
\be
\label{I1ij}
{\bf I}_{1h}(i,j) &=& \int_{\tilde{\G}} \varphi_j(x)\varphi_i(x)\,ds_x \textcolor{rot}{\bb I}\nonumber\\
&=& \left[\frac{|\G_{i-1}|}{6}\delta_{i-1,j} +\left(\frac{|\G_{i-1}|}{3} +\frac{|\G_{i}|}{3}\right)\delta_{i,j} +\frac{|\G_{i}|}{6}\delta_{i+1,j}\right] \textcolor{rot}{\bb I},
\en
and
\be
\label{I2ij}
{\bf I}_{2h}(i,j) &=& \int_{\tilde{\G}} \psi_j(x)\varphi_i(x)\,ds_x \textcolor{rot}{\bb I}\nonumber\\
&=& \left[\frac{|\G_{j}|}{2}\delta_{i,j} +\frac{|\G_{j}|}{2}\delta_{i,j+1}\right] \textcolor{rot}{\bb I}.
\en

\section*{Acknowledgments}
\textcolor{rot}{The work of G. Bao is supported in part by a NSFC Innovative Group Fun (No.11621101), an Integrated Project of the Major Research Plan of NSFC (No. 91630309), and an NSFC A3 Project (No. 11421110002). The work of L. Xu is partially supported by a Key Project of the Major Research Plan of NSFC (No. 91630205), and a NSFC Grant (No. 11371385).} The work of T. Yin is partially supported by the NSFC Grant (No. 11371385). \textcolor{rot}{We would also like to thank anonymous referees for their
comments.}

\end{document}